\newtheorem{theorem}{Theorem}[section]
\newtheorem{lemma}[theorem]{Lemma}
\newtheorem*{problem*}{Problem}
\newcommand{\Om}{\Omega}
\newcommand{\Bx}{\mathbf{x}}
\newcommand{\By}{\mathbf{y}}
\newcommand{\RR}{\mathbb{R}}
\newcommand{\CC}{\mathbb{C}}
\newcommand{\NN}{\mathbb{N}}
\newcommand{\ZZ}{\mathbb{Z}}
\newcommand{\Scal}{\mathcal{S}}
\newcommand{\p}{\partial}
\newcommand{\Ga}{\alpha}
\newcommand{\Gl}{\lambda}
\newcommand{\GG}{\Gamma}
\newcommand{\Gs}{\sigma}
\newcommand{\ds}{\displaystyle}
\newcommand{\eqnref}[1]{(\ref {#1})}
\newcommand{\Bu}{\mathbf{u}}
\newcommand{\beq}{\begin{equation}}
	\newcommand{\eeq}{\end{equation}}
\newcommand{\RN}[1]{%
	\textup{\uppercase\expandafter{\romannumeral#1}}%
}
\numberwithin{equation}{section}
\numberwithin{figure}{section}
\def\BK{{\bf K}}
\def\BS{{\bf S}}
\def\wtF{{\widetilde{F}}}
\def\wteta{{\widetilde{\eta}}}
\begin{document}

\title{Explicit analytic solution for the plane elastostatic problem with a rigid inclusion of arbitrary shape subject to arbitrary far-field loadings}
\author{Ornella Mattei$^1$ and Mikyoung Lim$^2$}
\date{\small{$^1$Department of Mathematics, San Francisco State University, CA 94132, USA\\
		$^2$Department of Mathematical Sciences, Korea Advanced Institute of Science and Technology, Daejeon 34141, Republic of Korea\\
		Email: mattei@sfsu.edu, mklim@kaist.ac.kr}}

\maketitle

\maketitle

\begin{abstract}
	\noindent
{We provide an analytical solution for the elastic fields in a two-dimensional unbounded isotropic body with a rigid inclusion.}
Our analysis is based on the boundary integral formulation of the elastostatic problem and geometric function theory. {Specifically, we use the coordinate system provided by the exterior conformal mapping of the inclusion to define a density basis functions on the boundary of the inclusion, and we use the Faber polynomials associated with the inclusion for a basis inside the inclusion.} The latter, which constitutes the main novelty of our approach, allows us to obtain an explicit series solution for the plane elastostatic problem for an inclusion of 
arbitrary shape in terms of the given arbitrary  far-field loading.  
\end{abstract}

\section{Introduction}

The so-called \textit{isolated inhomogeneity problem}, also known as the \textit{Eshelby problem}, consists in the determination of the elastic fields in an inclusion embedded in an unbounded medium, given some prescribed elastic fields in the far-field. Indeed, an inclusion with different material parameters from that of the background will induce some perturbation in the hosting medium which will depend on the shape of the inclusion as well as its material parameters. Such a problem has a long history (see, e.g., the review papers \cite{Kang:2009:CPS,Parnell:2016:EHM,Zhou:2013:ARR}), given its fundamental importance in material modelling, especially for its application to the determination of the effective properties of composite materials (see, for instance, the books by Buryachenko \cite{Buryachenko:2007:MHM}, Dvorak \cite{Dvorak:2013:MCM}, and Qu and Cherkaoui \cite{Qu:2006:FMS}). It was first addressed for an ellipsoidal inclusion by Poisson \cite{Poisson:1826:SMS} within the context of the Newtonian potential problem, then followed by Maxwell \cite{Maxwell:1873:TEM}, who provided some explicit formulas for the electric and magnetic fields inside the ellipsoid. 

With specific reference to the \textit{elastostatic case}, which is the topic of this paper, early results concerned the case of inclusions with simple shapes subject to uniform strains or tractions: just to name a few, see \cite{Goodier:1933:CSA,Southwell:1926:OCS} for spherical inclusions, \cite{Edwards:1951:SCA} for spheroids, and \cite{Robinson:1951:EEE,Sadowsky:1947:SCA,Sadowsky:1949:SCA} for ellipsoids. This problem is usually associated with Eshelby as his 1957 paper \cite{Eshelby:1957:DEF}, in which he proved that a homogeneous isotropic ellipsoidal inclusion embedded in an
unbounded homogeneous isotropic medium would experience uniform strains and
stresses when uniform strains or tractions were applied in the far-field, is one of the most cited papers in Applied Mechanics. Later, Eshelby \cite{Eshelby:1961:EII} conjectured that this occurs only for ellipsoidal inclusions, a fact that was proved independently by Kang and Miton \cite{Kang:2008:SPS} and Liu \cite{Liu:2008:SEC} for three-dimensional isotropic media subject to any uniform far-field loading, by Sendeckyi \cite{Sendeckyj:1970:EIP} for the two-dimensional case, and by Ru and Schiavone \cite{Ru:1996:EIA} for anti-plane elasticity.
Note that such a conjecture has not been proved yet for any uniform far-field loading in the case of anisotropic elastic media \cite{Asaro:1975:NUT,Kinoshita:1971:EFI,Walpole:1967:EFI}.

The extension of Eshelby's work \cite{Eshelby:1957:DEF} to \textit{inclusions of arbitrary shape} is not straightforward as it involves either the computation of complicated Green's functions (e.g., \cite{Chiu:1977:OTS,Nozaki:1997:EFP}), or solutions only in algorithmic closed form \cite{Rodin:1996:EIP}. 
In practical applications, on the other hand, such as in metallurgy, in which the goal is to model perturbations of elastic fields due to precipitates, twinnings and martensitic transformations, the inclusions have a more complex shape. 
To overcome such a drawback, some authors applied the theory of conformal mapping to the Eshelby problem in plane elastostatics by relying on the complex formulation introduced by Muskhelishvili \cite{Muskhelishvili:1963:SBP} (see also \cite{England:1971:CVM,Lu:1995:CVM}). 
This method can be applied to elastic inclusions of arbitrary shape, but in general, it does not provide an explicit solution (see, for instance, \cite{List:1966:TDE,Sendeckyj:1970:EIP}), and in those cases in which an explicit solution can be provided, the far-field loading is considered to be uniform \cite{Movchan1997ThePM,Ru:1999:ASE,Zou:2010:EPN}. The determination of \textit{explicit formulas} for the elastic fields in an inclusion of arbitrary shape subject to an arbitrary far-field loading is still, to the best of the authors' knowledge, an open problem.

We remark that analytical and numerical methods to compute the elastic tensor (often called the Eshelby tensor field) for inclusions of various shapes have been developed \cite{Chen:2015:NEE,Gao:2010:SGS,Huang:2009:EEE,Lee:2016:EEF,Zou:2010:EPN}. {
In particular, in \cite{Zou:2010:EPN}, the inclusion problem to determine the strain field of an infinite homogeneous medium induced by a uniform eigenstrain field in an inclusion was 	extensively studied. 
Eshelby's tensor field relates the resulting strain field to the uniform eigenstrain, and it has an expression as an integral on the boundary of the inclusion in terms of the derivatives of the Green's function of the elastic body. For inclusions of various shapes, given the corresponding conformal mappings, Eshelby's tensor field is explicitly expressed in \cite{Zou:2010:EPN}. 
In the present paper, we consider the inclusion problem to find the deformation due to an inclusion for a given arbitrary loading, when the solution is expressed as the single-layer potential having  the Green's function as kernel.}

For the conductivity problem (or antiplane elasticity), Jung and Lim \cite{Jung:2018:SSM} found an explicit solution for an inclusion of arbitrary shape based on the layer potential technique and geometric function theory: the solution inside the inclusion is expanded into a series of Faber polynomials (see \cite{Duren:1983:UF,Faber:1903:UPE} for the definition and properties), whereas the solution in the surrounding medium is expanded into a series of harmonic functions expressed in terms of the coordinates of the exterior conformal mapping, supposed to be known.  The layer potential operators and the Neumann--Poincar{\'e} operator admit expansions into series of the basis functions so that one can find a series solution to the transmission problem by using the layer potential technique; we refer the reader to see, for instance, the comprehensive books \cite{Ammari:2013:MSM:book,Ammari:2004:RSI} for  the layer potential formulation for the conductivity transmission problem and its applications.
Moreover, the two sets of interior and exterior basis functions have explicit relations on the boundary of the inclusion so that the interior and exterior values of the solution can be matched by using the transmission condition on the boundary of the inclusion (see also \cite{Choi:2018:GME:preprint}).
This geometric series solution method was successfully applied to the study of inclusion problems in anti-plane elasticity and the spectral property of the Neumann--Poincar{\'e} operator \cite{Choi:2020:ASR:preprint,Kim:2018:EEC:preprint,Jung:2020:DEE}.
Analogous results for the elastic problem have not been found yet. 

In the present paper, we extend the series solution approach for the conductivity transmission problem in \cite{Jung:2018:SSM} to the elastostatic case. In particular, we provide an explicit analytic series solution for the hard inclusion problem in plane elastostatics.

The remainder of this paper is organized as follows. Section 2 mainly describes the complex formulation for the plane elastostatic transmission problem.
Section 3 is devoted to the series solution for the rigid inclusion problem.
We provide solutions for inclusions of various orders in Section 4.
The paper ends with some concluding remarks in Section 5.

\section{Preliminary}

\subsection{Formulation of the problem}

Consider an unbounded homogeneous isotropic medium in $\mathbb{R}^2$ with an embedded simply connected homogeneous isotropic inclusion $\Omega$ of arbitrary shape.  Let $\lambda$ and $\mu$ be the Lam\'e constants of the system, $\lambda$ being the bulk modulus and $\mu$ the shear modulus. 
Suppose that the displacement field, $\mathbf{u}(\mathbf{x})$, is assigned in the far-field in a quasi-static manner, that is $\mathbf{u}(\Bx)=\mathbf{u}_0(\Bx) + O(|\Bx|^{-1})$  as $|\mathbf{x}|\to \infty$, with $\mathbf{u}_0(\Bx)$ the far-field displacement.

If there were no inclusion, the displacement field $\mathbf{u}(\mathbf{x})$ would be exactly $\mathbf{u}_0(\mathbf{x})$, whereas the strain field $\boldsymbol{\varepsilon}(\mathbf{x})$  and the stress field $\boldsymbol{\sigma}(\mathbf{x})$ would be determined uniquely by the following well-known formulas (assuming  the displacements to be small):
\begin{equation}\label{eqn:stress_strain}
\boldsymbol{\varepsilon}=\frac{1}{2}\left(\nabla\mathbf{u}+\nabla^{\footnotesize\mathrm{T}}\mathbf{u}\right)\quad\mbox{and}\quad \boldsymbol{\sigma}=\mathbf{C}\hskip 0.01cm\boldsymbol{\varepsilon}=\lambda\,\text{tr}\boldsymbol{\varepsilon} +2\mu\,\boldsymbol{\varepsilon},
\end{equation}
where tr stands for the trace. If we assume there are no body forces (so that the stress field is divergence-free), then $\mathbf{u}_0(\mathbf{x})$ satisfies the equation $\mathcal{L}_{\lambda,\mu} \mathbf{u}_0= \mathbf{0} $ in $\mathbb{R}^2$, $\mathcal{L}_{\lambda,\mu}$ being the following differential operator
$$
\mathcal{L}_{\lambda,\mu} \mathbf{u}:= \nabla\cdot \mathbf{C}\hskip 0.01cm\boldsymbol{\varepsilon}(\mathbf{x})=\mu \Delta \mathbf{u}+(\lambda+\mu)\nabla \nabla \cdot \mathbf{u}.
$$
Note that $\mathcal{L}_{\lambda,\mu}$ is elliptic under the strong convexity assumption for which 
$\mu>0$ and $\lambda+\mu>0$ (see, e.g., \cite{Kupradze:1965:PMT}). 

However, due to the presence of the inclusion $\Om$, the displacement field in the medium is not $\mathbf{u}_0(\mathbf{x})$. Indeed, under the assumptions that the inclusion is rigid (so that the displacement field on $\p \Omega$ is a rigid displacement), and that there are no body forces, $\Bu(\Bx)$ turns out to be the solution of the system
\begin{equation} \label{elas_eqn}
\ \left \{
\begin{array} {ll}
\ds \mathcal{L}_{\lambda,\mu} \mathbf{u}= \mathbf{0} \quad &\mbox{ in }  \mathbb{R}^2 \setminus \overline{\Om},\\[2mm]
\ds \mathbf{u}\big|^+ =\sum_{j=1}^3 c_j\mathbf{R}_j(\Bx) \quad &\mbox{ on } \p \Om,\\[2mm]
\ds \mathbf{u}(\Bx)-\mathbf{u}_0(\Bx) = O(|\Bx|^{-1}) \quad& \mbox{ as } |\Bx| \rightarrow \infty,
\end{array}
\right.
\end{equation}
where the superscript $+$ denotes the limit from outside $\Om$, and $\{\mathbf{R}_1,\mathbf{R}_2,\mathbf{R}_3\}$ is a basis of the space of rigid displacements, say
\begin{equation}
\mathbf{R}_1=\left[\begin{array}{c}
1\\
0
\end{array}\right]\,,\quad \mathbf{R}_2=\left[\begin{array}{c}
0\\
1
\end{array}\right]\,,\quad \mathbf{R}_3=\left[\begin{array}{c}
x_2\\
-x_1
\end{array}\right].
\end{equation}

The conormal derivative from outside on $\p \Om$, that represents the traction forces on the exterior of $\p \Om$, is defined to be 
$$\p_\nu \mathbf{u} :=\lambda\left(\nabla\cdot \mathbf{u}\right)\mathbf{n} + \mu \left(\nabla\mathbf{u}+\nabla^{\footnotesize\mathrm{T}}\mathbf{u}\right)\mathbf{n},$$
$\mathbf{n}$ being the unit outward normal vector to $\p\Om$. 
The real coefficients $c_j$, $j=1,2,3$, in \eqref{elas_eqn} are determined by the following equilibrium condition on $\p\Om$:
\begin{equation}\label{equilibrium_boundary}
\int_{\p\Om}\p_{\nu}\mathbf{u}\big|^+\cdot\mathbf{R}_jd\sigma=0,\quad j=1,2,3.
\end{equation}

\subsection{Layer potential technique}

A classical way of solving the transmission problem \eqref{elas_eqn} is the layer potential technique (see, e.g., \cite{Kupradze:1965:PMT}), which is based on the ansatz that the displacement field in the medium is the superposition of the far-field loading $\Bu_0(\Bx)$ (the field that would be in the medium if there were no inclusion), and a perturbation field $\mathbf{S}_{\p \Omega}[\boldsymbol{\varphi}](\Bx)$ (the term which takes into account the effect of the inclusion):
\begin{equation}\label{relation_u_u0_S}
\mathbf{u}(\Bx)=\mathbf{u}_0(\Bx)+\mathbf{S}_{\p \Omega}[\boldsymbol{\varphi}](\Bx),
\end{equation}
for some density function $\mathbf{\boldsymbol{\varphi}}=(\varphi_1,\ \varphi_2)^T$ satisfying the equilibrium condition \eqref{equilibrium_boundary}, i.e.
\beq\label{cond:phi}
\int_{\p\Om}{\boldsymbol{\varphi}}\cdot\mathbf{R}_jd\sigma=0,\quad j=1,2,3.
\eeq
Indeed, $\boldsymbol{\varphi}(\Bx)$ is given by ${\boldsymbol{\varphi}}=\p_\nu \mathbf{{u}}\big|^+$ on $\p\Om$ (see, e.g., \cite[Appendix A.2]{Kang:2019:QCS}).

The perturbation field $\mathbf{S}_{\p \Omega}[\boldsymbol{\varphi}](\Bx)$ is called \textit{the single-layer potential}  of the density function $\boldsymbol{\varphi}(\Bx)$ on $\p\Omega$ associated with the Lam\'{e} system, and it is defined as
\begin{equation}
\label{single_layer}
\BS_{\p \Om} [\boldsymbol{\varphi}] (\Bx)  := \int_{\p \Om} \boldsymbol{\Gamma}(\Bx - \By) [\boldsymbol{\varphi}] (\By) \,\mathrm{d}\Gs (\By) \quad \mbox{for }\Bx \in \RR^2,
\end{equation}
where $\boldsymbol{\Gamma}=(\GG_{ij})_{i,j=1}^2$ is the Kelvin matrix of the fundamental solution to the Lam\'{e} system in $\RR^2$ when there is no inclusion, namely,
\begin{equation}\label{eqn:Kelvin_matrix}
\GG_{ij}(\Bx) = \frac{\Ga_1}{2\pi} \delta_{ij} \ln |\Bx| - \frac{\Ga_2}{2\pi} \frac{x_i x_j}{|\Bx|^2},
\end{equation}
$\delta_{ij}$ being Kronecker's delta, and
\begin{equation}\label{alpha1_2}
\Ga_1 = \frac{1}{2}\left( \frac{1}{\mu} + \frac{1}{2 \mu + \Gl}\right) \quad \mbox{and} \quad \Ga_2 = \frac{1}{2} \left(\frac{1}{\mu} - \frac{1}{2 \mu + \Gl}\right).
\end{equation}
Note that, due to \eqref{relation_u_u0_S}, the problem of finding the displacement field $\mathbf{u}(\Bx)$ satisfying \eqref{elas_eqn} is equivalent to finding the single-layer potential $\mathbf{S}_{\p \Omega}[\boldsymbol{\varphi}](\Bx)$ \eqref{single_layer} by finding the density function $\boldsymbol{\varphi}(\Bx)$ satisfying \eqref{cond:phi}.
The density function $\boldsymbol{\varphi}(\Bx)$ is associated with the inversion of the operator $\displaystyle -\frac{1}{2}\mathbf{I}+\mathbf{K}^*_{\p\Omega}$, where $\mathbf{K}^*_{\p\Omega}$ is the so-called elastostatic Neumann--Poincar\'e operator:
\begin{equation*}
\BK^*_{\p \Om} [\boldsymbol{\varphi}] (\Bx) := \mathrm{p.v.} \int_{\p \Om} \p_{\nu_{\Bx}}\GG (\Bx - \By) \boldsymbol{\varphi}(\By)\, \mathrm{d}\Gs (\By) \quad \mbox{a.e.} \quad \Bx \in \p \Om.
\end{equation*}
The symbol  p.v. stands for the Cauchy principal value, $\p_{\nu_{\Bx}}\GG (\Bx - \By)$ denotes the conormal derivative of the Kelvin matrix with respect to the $\Bx$-variable, defined by
\begin{equation*}
\p_{\nu_{\Bx}}\GG (\Bx - \By)\mathbf{b}=\p_{\nu_{\Bx}}\left(\GG (\Bx - \By)\mathbf{b}\right)
\end{equation*}
for any constant vector $\mathbf{b}$.

\subsection{Complex formulation}\label{subsec:complex}

For plane elastostatics, one could identify  the coordinate vector $\Bx=(x_1,\, x_2)^\mathrm{T}\in\RR^2$ with the complex variable $z=x_1+\mathrm{i}x_2\in\CC$. The vector-valued displacement field $\mathbf{u}=(u_1,\, u_2)^\mathrm{T}$ then can be expressed, upon complexification, as the complex function $u(z)= u_1+\mathrm{i}u_2$. 
Analogously, the density function $\mathbf{\boldsymbol{\varphi}}=(\varphi_1,\, \varphi_2)^T$ and the single-layer potential $\mathbf{S}_{\p \Omega}[\boldsymbol{\varphi}](\Bx)$ can be written as $\varphi(z)=\varphi_1+\mathrm{i}\varphi_2$ and $S[\varphi](z)=(\BS_{\p\Om}[\boldsymbol{\varphi}])_1+\mathrm{i}(\BS_{\p\Om}[\boldsymbol{\varphi}])_2$, respectively.

Following \cite{Muskhelishvili:1963:SBP}, the equation $\mathcal{L}_{\lambda,\mu}\mathbf{u}=\mathbf{0}$ in $\CC\setminus\overline{\Om}$, in which the two unknowns are $u_1(\Bx)$ and $u_2(\Bx)$, can then be written as an equation in terms of two complex functions $\phi(z)$ and $\psi(z)$, which are analytic in $\CC\setminus\overline{\Om}$:
\begin{equation}\label{rep_u_kappa}
2\mu \hskip .01cm u(z)=\kappa \phi(z)-z\overline{\phi'(z)}-\overline{\psi(z)},\quad \kappa=\frac{\lambda+3\mu}{\lambda+\mu},
\end{equation}
where the bar denotes complex conjugation. Similarly, the background solution $u_0(z)=(\mathbf{u}_0)_1+\mathrm{i}(\mathbf{u}_0)_2$ admits the complex representation (dividing by the constant $\mu$ to simplify the formula)
\begin{equation}\label{eqnd:complex_repr_u0}
2u_0(z)=\kappa h(z)-z\overline{h'(z)}-\overline{l(z)}\quad\mbox{in }\Om\ (\mbox{or in }\CC\setminus\overline{\Om}),
\end{equation}
where $h(z)$ and $l(z)$ are analytic functions in $\Om$ (or in $\CC\setminus\overline{\Om}$). 
Indeed, $\mathbf{u}_0(\mathbf{x})$ satisfies $\mathcal{L}_{\lambda,\mu} \mathbf{u}_0= \mathbf{0} $ in $\mathbb{R}^2$, so that the representation \eqnref{eqnd:complex_repr_u0} holds in the whole complex plane.

As the single-layer potential $\mathbf{S}_{\p \Omega}[\boldsymbol{\varphi}](\Bx)$ satisfies the Lam\'{e} system as well, the following holds 
\begin{equation}\label{complex_form_S}
2S[\varphi](z)=\kappa f(z)-z\overline{f'(z)}-\overline{g(z)} \quad\mbox{in }\Om\ (\mbox{or in }\CC\setminus\overline{\Om})
\end{equation}
for some complex analytic functions $f(z)$ and $g(z)$, which can be expressed as
\begin{align}
\label{f}f(z)&=f[\varphi](z)={\alpha_2}\mathcal{L}[\varphi](z),\\
\label{g}g(z)&=g[\varphi](z)=-{\alpha_1}\mathcal{L}[\overline{\varphi}](z)
-{\alpha_2}\mathcal{C}[\overline{\zeta}\varphi](z)
\end{align}
with the complex integral operators $\mathcal{L}$ and $\mathcal{C}$ given by
\begin{align}
\label{mathcal_L}\mathcal{L}[\psi](z)&:=\frac{1}{2\pi}\int_{\p \Om}\log(z-\zeta)\psi(\zeta)\,\mathrm{d}\sigma(\zeta),\\
\label{mathcal_C}\mathcal{C}[\psi](z):&=\mathcal{L}[\psi]'(z)=\frac{1}{2\pi}\int_{\p \Om}\frac{\psi(\zeta)}{z-\zeta}\,\mathrm{d}\sigma(\zeta)
\end{align}
for any complex function $\psi(z)$ (see \cite{Ando:2018:SPN} for the derivation).
Indeed, we can express the single-layer potential $S[\varphi](z)$ in a simpler form by using the real logarithmic function (note that, by \eqnref{alpha1_2} and \eqnref{rep_u_kappa}, $\kappa\alpha_2=\alpha_1$):
\beq\label{singlelayer:simple}
2S[\varphi](z)=2{\alpha_1}L[\varphi](z)-\alpha_2 z\, \overline{\mathcal{C}[\varphi](z)}+\alpha_2\overline{\mathcal{C}[\overline{\zeta}\varphi](z)}\quad\mbox{in }\CC
\eeq
with 
\beq\label{singlelayer:real}
L[\varphi](z):=\frac{1}{2\pi}\int_{\p\Om}\ln |z-\zeta|\varphi(\zeta)\, d\sigma(\zeta).
\eeq

The rigid inclusion transmission condition on $\p \Om$, see \eqref{elas_eqn}, and the single-layer potential ansatz \eqref{relation_u_u0_S} imply that, on the boundary, 
\begin{equation}\label{trans_cond_S_u0}
S[\varphi](z)=-{u}_0(z)+c_1+\mathrm{i}c_2- \mathrm{i}c_3 z.
\end{equation}

In Section \ref{sec:series_solution}, we will develop a series solution method for the transmission problem \eqref{elas_eqn} by using the theory of \textit{conformal mapping}. Therefore, we review some related results in complex analysis in the following subsection.

\subsection{Exterior conformal mapping and associated density basis functions}\label{subsec:exterior}

From the Riemann mapping theorem, there uniquely exist a real number $\gamma>0$ and a complex function $\Psi(w)$ that conformally maps the region $\mathcal{R}=\{w\in\CC:|w|>\gamma\}$ onto $\mathbb{C}\setminus\overline{\Omega}$ and satisfies $\Psi(\infty)=\infty$ and $\Psi'(\infty)=1$. {Here, the quantity $\gamma$ is called the conformal radius of $\Om$, which coincides with the logarithmic capacity of $\overline{\Om}$}, and $\Psi$ is the \textit{exterior conformal mapping} associated with $\Om$. 
The function $\Psi(w)$ admits the following Laurent series expansion:
\beq\label{eqn:extmapping} 
\Psi(w)=w+{a}_0+\frac{{a}_1}{w}+\frac{{a}_2}{w^2}+\cdots=w+\sum_{k=0}^{\infty}a_kw^{-k}
\eeq 
for some complex coefficients $a_n$, see \cite[Chapter 1.2]{Pommerenke:1992:BBC:book} for the derivation.
Being a conformal mapping, $\Psi(w)$ preserves the angle between two intersecting curves and, hence, it can define an orthogonal curvilinear coordinate system in $\mathbb{C}\setminus\overline{\Om}$ in a simple way. 
Indeed, we can express $w\in\mathcal{R}$ in the modified polar coordinates $(\rho,\theta)\in(\rho_0,\infty)\times[0,2\pi)$ with $\rho_0=\ln \gamma$ as 
\begin{equation}\label{w}
w=e^{\rho+\mathrm{i}\theta}.
\end{equation}
Consequently, $(\rho,\theta)$ provides an orthogonal coordinate pair for $z=\Psi(w)\in\CC\setminus\overline{\Om}$ via the relation $$z=\Psi(\rho,\theta):=\Psi(e^{\rho+\mathrm{i}\theta}).$$

From the Caratheodory extension theorem \cite{Caratheodory:1913:GBR}, $\Psi(\rho,\theta)$ admits the continuous extension to the boundary of the domain. We assume that the boundary $\p\Om$ is $C^{1,\alpha}$ so that, by the Kellogg--Warschawski theorem \cite{Pommerenke:1992:BBC:book}, $\Psi'(\rho,\theta)$ can be continuously extended to the boundary. In particular, the map $\Psi(\rho,\theta)$ is $C^1$ on $[\rho_0,\infty)\times[0,2\pi)$. {Therefore, we can use the polar coordinate system $(\rho,\theta)$ associated with \eqref{eqn:extmapping} to define a density basis, that is, a basis for functions defined on $\p\Om$ ($\rho=\rho_0$)}, as follows
\begin{equation}\label{exterior_basis}
\varphi_m(z)=\frac{\mathrm{e}^{\mathrm{i}m\theta}}{h},\quad \varphi_{-m}(z)=\frac{\mathrm{e}^{-\mathrm{i}m\theta}}{h}
\end{equation}
for each $m\in\NN$, where $h$ is the scale factor
\begin{equation}\notag
h(\rho, \theta) = \left|\frac{\partial \Psi} {\partial \rho}\right| = \left|\frac{\partial \Psi} {\partial \theta}\right|.
\end{equation}
{Later, we will use the basis \eqnref{exterior_basis} to expand the density function $\varphi(z)$ of the single-layer potential \eqref{single_layer}.}
Notice that the length element on $\p\Om$ is then given by
\beq\label{dsigma}
\mathrm{d}\sigma(\zeta)=h(\rho_0,\theta)\mathrm{d}\theta\quad\mbox{for }\zeta=\Psi(\rho_0,\theta).\eeq
\subsection{Faber polynomials}

In view of the exterior conformal mapping \eqref{eqn:extmapping}, the expansion
\begin{equation}\label{eqn:Fabergenerating}
\frac{w\Psi'(w)}{\Psi(w)-z}=\sum_{m=0}^{\infty}F_m(z)w^{-m}
\end{equation}
is valid for sufficiently large $|w|$, where 
the function $F_m(z)$ is an $m$-th order monic polynomial, called the $m$-\textit{th Faber polynomial} associated with $\Psi(w)$ (or $\Omega$), that is uniquely determined by the coefficients of $\Psi(w)$ in \eqref{eqn:extmapping}. 
As an example, the first three polynomials are
$$F_0(z)=1,\quad F_1(z)=z-a_0,\quad F_2(z)=z^2-2a_0 z+(a_0^2-2a_1).$$
In general, by comparing the $w^{-m}$ terms in \eqref{eqn:Fabergenerating} (after multiplying both sides by $\Psi(w)-z$) one observes the following recursion relation\begin{equation}\label{eqn:Faberrecursion}
-ma_m=F_{m+1}(z)+\sum_{s=0}^{m}a_{s}F_{m-s}(z)-zF_m(z) \qquad\mbox{for each } m= 0,1,2,\dots.
\end{equation}
The concept of Faber polynomials was first introduced by G. Faber in \cite{Faber:1903:UPE} and has been one of the essential elements in geometric function theory (see, e.g., \cite{Duren:1983:UF}).

The Faber polynomials satisfy the convenient property that $F_m(\Psi(w))$ has only one positive term: $w^m$. In other words,
\begin{equation}\label{eqn:Faberdefinition}
F_m(\Psi(w))
=w^m+\sum_{k=1}^{\infty}c_{m,k}{w^{-k}}.
\end{equation}
The coefficients $c_{m,k}$ are called the \textit{Grunsky coefficients}, and they satisfy the following identity: 
\beq\label{Grun:iden}
k c_{m,k}=m c_{k,m}\quad\mbox{for all }m,k\in\NN.
\eeq
Furthermore, they satisfy the so-called {\it strong Grunsky inequalities} \cite{Grunsky:1939:KSA} (see also \cite{Duren:1983:UF}): let $N$ be a positive integer and $\lambda_1,\lambda_2,\dots,\lambda_N$ be complex numbers that are not all zero, then we have
\beq\label{inequal:strong}
\sum_{k=1}^\infty k\left|\sum_{n=1}^N\frac{c_{n,k}}{\gamma^{n+k}}\lambda_n\right|^2\leq\sum_{n=1}^N n\left|\lambda_n\right|^2.
\eeq
The strict inequality holds unless $\Om$ has measure zero. 
As a corollary, the following relation,  the so-called {\it weak Grunsky inequality}, holds:
\beq\label{inequal:week}
\left|\sum_{n=1}^N\sum_{k=1}^N k\frac{c_{n,k}}{\gamma^{n+k}}\lambda_n \lambda_k\right|\leq\sum_{n=1}^N n\left|\lambda_n\right|^2.
\eeq
Plugging $\lambda_n=\frac{1}{\sqrt{m}}\delta_{nm}$ into \eqnref{inequal:strong}, we have 
$$\sum_{k=1}^\infty \left|\sqrt{\frac{k}{m}}\frac{c_{m,k}}{\gamma^{m+k}}\right|^2\leq 1.$$
In particular, $\left|\frac{c_{m,m}}{\gamma^{2m}}\right|\leq 1$. Now, letting $\lambda_n=\frac{1}{m}\delta_{nm}+\frac{1}{s}\delta_{ns}$ for $m\neq s$, \eqnref{inequal:week} leads us to 
$$\left|\frac{c_{m,m}}{m\gamma^{2m}}+\frac{c_{m,s}}{m\gamma^{m+s}}+\frac{c_{s,m}}{s\gamma^{s+m}}+\frac{c_{s,s}}{s\gamma^{2s}}\right|\leq\frac{1}{m}+\frac{1}{s}\leq 2$$
and, by using \eqnref{Grun:iden}, 
\beq\label{c:bound}
\left|c_{m,s}\right|\leq 2 m \gamma^{m+s}.
\eeq
By applying this bound to \eqnref{eqn:Faberdefinition}, it follows that
\beq\label{Faber:bound}
\left|F_m(\Psi(w)\right|\leq |w|^m+2m\gamma^m\frac{\gamma}{|w|-\gamma}.
\eeq

We can derive more formulas for Faber polynomials starting from the generating relation \eqnref{eqn:Fabergenerating}.
For instance, by integrating \eqnref{eqn:Fabergenerating} with respect to $w$, we have (see, e.g., \cite[Chapter 5]{Duren:1983:UF})
$$
\log\left(\frac{w}{\Psi({w})-{z}}\right)=\sum_{m=1}^\infty \frac{1}{m}F_m({z})w^{-m}.
$$
Differentiation of this relation with respect to $z$ leads to
\beq\label{fund:deri:z}
\frac{1}{\Psi(w)-z}=\sum_{m=1}^\infty\frac{1}{m}F_m'(z)w^{-m}.
\eeq
Note that the complex function $1/({\Psi(w)-z})$ is analytic with respect to $w$ in $\{w:|w|>\gamma\}$ and decays at infinity, while the right-hand side is its Laurent series. Hence, for a fixed $z\in\Om$, \eqnref{fund:deri:z} is uniformly and absolutely convergent for $|w|>\gamma_1$ with $\gamma_1>\gamma$.

Note that, thanks to the properties of Faber polynomials, any complex function $v(z)$ analytic in ${\Omega_R}$, $R>\gamma$, admits the expansion
\beq\label{exp_anal_function_Faber}
v(z)=\sum_{m=0}^{\infty}d_m F_m(z)\quad\mbox{in }\overline{\Om}
\eeq
with the coefficients $d_m$ given by (from the Cauchy integral formula and \eqnref{eqn:Fabergenerating})
\beq\label{fabercoeff}
d_m = \frac{1}{2\pi \mathrm{i}}\int_{|w|=r}\frac{v(\Psi(w))}{w^{m+1}}\, dw,\quad \gamma<r<R.
\eeq
In other words, Faber polynomials form the \textit{interior basis}: they can be used as an expansion basis on $\overline{\Om}$ for complex functions that are analytic on a domain containing $\overline{\Om}$.

\section{Series solution for the rigid inclusion problem}\label{sec:series_solution}
The goal of this section is to develop a series solution method for the transmission problem \eqref{elas_eqn}, in order to provide the displacement field in the exterior of the inclusion. Specifically, we will expand the far-field loading in terms of the interior basis and the single-layer potential in terms of the density basis \eqref{exterior_basis}. By using the transmission condition \eqref{trans_cond_S_u0} and the properties of Faber polynomials, we will find an explicit expression for the elastic fields in the exterior of the inclusion in terms of the coordinate $w$ \eqref{w}.

\subsection{Series expansion of the far-field loading}
As shown by equation \eqref{exp_anal_function_Faber}, Faber polynomials constitute a basis for analytic functions in $\overline{\Om}$. Specifically, we can expand the functions $h(z)$ and ${l(z)}$ in the complex representation \eqref{eqnd:complex_repr_u0} of the far-field loading $u_0(z)$ as
\begin{equation}\label{h_l}
h(z)=\sum_{m=0}^{\infty}A_mF_m(z)\,,\qquad {l(z)}=\sum_{m=0}^{\infty}B_m{F_m(z)}
\end{equation}
for some complex coefficients $A_m$ and $B_m$ to be determined by using equation \eqref{fabercoeff}. 
Hence, the far-field loading $u_0(z)$ in \eqref{eqnd:complex_repr_u0} can be written as
\begin{equation}\label{u_0_complete_expansion}
2u_0(z)=\kappa \sum_{m=0}^{\infty}A_mF_m(z)-z\sum_{m=1}^{\infty}\overline{A_m}\,\overline{F_m'(z)}-\sum_{m=0}^{\infty}\overline{B_m}\,\overline{F_m(z)}\quad\mbox{in }\overline{\Om}.
\end{equation}

\subsection{Series expansion of the single-layer potential}\label{Section_Series_exp_single_layer_pot}

To obtain a series expansion of the single-layer potential $S[\varphi](z)$, we expand the density function $\varphi$ on $\p\Om$ in terms of the density basis \eqref{exterior_basis} on $\p\Om$:
\begin{equation}\label{varphi_expansion}
\varphi(z)=\sum_{m=1}^{\infty}\left(s_m^{(1)}+\mathrm{i}s_m^{(2)}\right)\varphi_{-m}+\left(s_m^{(3)}+\mathrm{i}s_m^{(4)}\right)\varphi_{m},
\end{equation}
where $s_m^{(j)}$, $m\in\mathbb{N}$, $j=1,2,3,4$, are real coefficients. 
The constant term ($m=0$) is zero due to the equilibrium condition \eqnref{cond:phi} with $j=1,2$, and the condition with $j=3$ implies that
$
\mbox{Im}\left(\int_{\p\Om}\varphi(z)\overline{z}d\sigma(z)\right)=0,
$
i.e.,
\beq\label{s_1_4:cond}
s_1^{(4)}=-\mbox{Im}\left(\sum_{m=1}^\infty \left(s_m^{(1)}+\mathrm{i}s_m^{(2)}\right)\overline{a_m}\right).\eeq
From now on, we assume $\gamma=1$ for the sake of simplicity.

Notice that, by plugging \eqref{varphi_expansion} into \eqref{singlelayer:simple}, the single-layer potential $S[\varphi](z)$ can be expressed in in terms of the basis functions \eqref{exterior_basis} as follows:
\begin{align}
2S[\varphi](z)\notag
&=2{\alpha_1}L[\varphi](z)-\alpha_2 z\, \overline{\mathcal{C}[\varphi](z)}+\alpha_2\overline{\mathcal{C}[\overline{\zeta}\varphi](z)}\quad\mbox{in }\CC\\\notag
&=\alpha_1  \sum_{m=1}^{\infty}\bigg[\left(s_m^{(1)}+\mathrm{i}s_m^{(2)}\right)2{L}[\varphi_{-m}](z)+\left(s_m^{(3)}+\mathrm{i}s_m^{(4)}\right)2{L}[\varphi_m](z)\bigg]\\\notag
&-\alpha_2\, z\sum_{m=1}^\infty \left[\, \overline{\left(s_m^{(1)}+\mathrm{i}s_m^{(2)}\right)} \, \overline{\mathcal{C}[\varphi_{-m}](z)}\, +  \overline{\left(s_m^{(3)}+\mathrm{i}s_m^{(4)}\right)}\, \overline{\mathcal{C}[\varphi_m](z)}\, \right] \\\label{Svarphi:expan}
&+\alpha_2\sum_{m=1}^{\infty}\left[\, \overline{\left(s_m^{(1)}+\mathrm{i}s_m^{(2)}\right)}\, \overline{\mathcal{C}[\overline{\zeta}\varphi_{-m}](z)}\, +\overline{\left(s_m^{(3)}+\mathrm{i}s_m^{(4)}\right)}\, \overline{\mathcal{C}[\overline{\zeta}\varphi_{m}](z)}\, \right].
\end{align}
The operator $L$ given by \eqnref{singlelayer:real}, which corresponds to the single-layer potential of the Laplacian, admits the following series expansion \cite{Jung:2018:SSM}: for each $m\in\NN$,
\begin{align}
L\left[ \varphi_{-m}\right](z)
&=\begin{cases}
\ds -\frac{1}{2m}\, \overline{F_m(z)},\quad&z\in\overline{\Om},\\[2mm]
\ds -\frac{1}{2m}\left(\, \overline{F_m(z)}-\overline{w^m}+{w^{-m}}\right),\quad&z=\Psi(w)\in\CC\setminus\overline{\Om},
\end{cases}\\
L\left[ \varphi_m\right](z)
&=\begin{cases}
\ds -\frac{1}{2m}F_m(z),\quad&z\in\overline{\Om},\\[2mm]
\ds -\frac{1}{2m}\left(F_m(z)-w^m+\overline{w^{-m}}\right),\quad&z=\Psi(w)\in\CC\setminus\overline{\Om}.
\end{cases}
\end{align}
Likewise, the explicit computation of the integral operators $\mathcal{C}[\varphi_{\pm m}](z)$ and ${\mathcal{C}[\overline{\zeta}\varphi_{\pm m}](z)}$, whose general expression is given by \eqref{mathcal_C}, depends on whether $z\in\mathbb{C}\setminus\overline{\Omega}$ or $z\in\overline{\Omega}$, as will be shown in  Lemma \ref{lemma:C:exp:inside} and Lemma \ref{lemma:operato:ext}.

\subsubsection{Series expansion of the single-layer potential inside the inclusion}

For notational convenience, we define the following polynomial functions
\begin{align}\label{def:twF}
\widetilde{F}_k(z)&:=
\begin{cases}
\ds\frac{1}{k}{F_k'(z)}\quad&\mbox{for }k\geq 1,\\
\ds 0\quad&\mbox{for }k\leq0.
\end{cases}
\end{align}
\begin{lemma}\label{lemma:C:exp:inside}
	For each $m\in\NN$, we have
	\begin{align*}
	\mathcal{C}\left[\varphi_{\pm m}\right](z)&=-\wtF_{\pm m}(z),\quad
	\mathcal{C}\left[\overline{\zeta}\,\varphi_{\pm m}\right](z)
	= -\sum_{k=-1}^\infty \overline{a_k}\, \wtF_{k\pm m}(z)\quad\mbox{for }z\in\Om.
	\end{align*}
\end{lemma}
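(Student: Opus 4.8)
The plan is to pass to the $w$-variable through the exterior conformal map and reduce each operator to a contour integral over the unit circle, after which the Faber-polynomial generating identity \eqnref{fund:deri:z} extracts the coefficients in closed form. First I would parametrize $\p\Om$ by $\zeta=\Psi(e^{\mathrm{i}\theta})$; recall that $\gamma=1$, so that $\rho_0=0$ and $|w|=1$ on $\p\Om$. Writing $w=e^{\mathrm{i}\theta}$, the density basis \eqnref{exterior_basis} reads $\varphi_{\pm m}=w^{\pm m}/h$, while the length element \eqnref{dsigma} is $d\sigma=h\,d\theta$; hence the scale factor $h$ cancels, and using $d\theta=dw/(\mathrm{i}w)$ the definition \eqnref{mathcal_C} becomes
\[
\mathcal{C}[\varphi_{\pm m}](z)=\frac{1}{2\pi\mathrm{i}}\int_{|w|=1}\frac{w^{\pm m-1}}{z-\Psi(w)}\,dw,\qquad z\in\Om.
\]
The key observation is that for interior $z\in\Om$ the value $\Psi(w)$ never equals $z$ on $\{|w|\ge1\}$, so the integrand is analytic in $w$ for $|w|>1$ and continuous up to $|w|=1$; in particular no principal value is needed.

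Next I would insert the expansion \eqnref{fund:deri:z}, i.e. $1/(z-\Psi(w))=-\sum_{n\ge1}\wtF_n(z)\,w^{-n}$ in the notation \eqnref{def:twF}, which converges uniformly on every circle $|w|=r$ with $r>1$. Deforming the contour from $|w|=1$ to $|w|=r$ (permissible by Cauchy's theorem, since the integrand is analytic in the annulus and continuous up to $|w|=1$) and integrating term by term against $\frac{1}{2\pi\mathrm{i}}\int_{|w|=r}w^{j}\,dw=\delta_{j,-1}$ selects a single Laurent coefficient. For $\varphi_{m}$ this gives $-\wtF_m(z)$, and for $\varphi_{-m}$ it gives $0=-\wtF_{-m}(z)$ (the coefficient of a positive power of $w$ in a series of negative powers vanishes), yielding the first identity $\mathcal{C}[\varphi_{\pm m}](z)=-\wtF_{\pm m}(z)$ uniformly in the sign.

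For the second identity I would additionally expand the factor $\overline{\zeta}=\overline{\Psi(w)}$ on the unit circle. Conjugating \eqnref{eqn:extmapping} and using $\overline{w}=w^{-1}$ on $|w|=1$ gives $\overline{\zeta}=\sum_{k=-1}^{\infty}\overline{a_k}\,w^{k}$ with the convention $a_{-1}=1$. Substituting this into the contour integral, interchanging the sum with the integral, and applying the same coefficient-extraction to each term $w^{k\pm m-1}$ produces $-\overline{a_k}\,\wtF_{k\pm m}(z)$; summing over $k\ge-1$ then gives $\mathcal{C}[\overline{\zeta}\varphi_{\pm m}](z)=-\sum_{k=-1}^{\infty}\overline{a_k}\,\wtF_{k\pm m}(z)$, where the terms with $k\pm m\le0$ drop out automatically through the convention $\wtF_{j}=0$ for $j\le0$.

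The main obstacle is the analytic bookkeeping rather than any deep idea: I must justify (i) the contour deformation together with term-by-term integration, which rests on the uniform convergence of \eqnref{fund:deri:z} on circles $|w|=r>1$ and on the analyticity of $1/(z-\Psi(w))$ across $\{|w|\ge1\}$ for interior $z$; and (ii) the interchange of the sum $\sum_{k\ge-1}\overline{a_k}w^k$ with the integral. For (ii) the $C^{1,\alpha}$ regularity of $\p\Om$ and the Kellogg--Warschawski theorem ensure that $\overline{\Psi}$ extends continuously (indeed $C^1$) to $|w|=1$ with an absolutely and uniformly convergent Fourier series $\sum_k\overline{a_k}w^k$, which legitimizes the exchange. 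Once these convergence points are secured, both identities follow directly from the orthogonality relation $\frac{1}{2\pi\mathrm{i}}\int_{|w|=r}w^{j}\,dw=\delta_{j,-1}$.
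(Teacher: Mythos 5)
Your proposal is correct and follows essentially the same route as the paper: parametrize $\p\Om$ by the conformal map so the scale factor $h$ cancels, insert the generating identity \eqnref{fund:deri:z}, extract the single surviving coefficient by orthogonality on the circle, and for the second identity expand $\overline{\zeta}=\sum_{k\ge -1}\overline{a_k}w^k$ (with $a_{-1}=1$) and interchange sum and integral using the $C^1$ boundary regularity from Kellogg--Warschawski. The only cosmetic difference is your regularization—deforming the contour to $|w|=r>1$—where the paper instead evaluates at $\Psi((1+t)\eta)$ and lets $t\to0^+$; these are the same device for avoiding convergence of the series exactly on $|w|=1$.
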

\begin{proof}
	We parametrize $\p\Om$ by $\zeta=\Psi(\eta)=\Psi(e^{\mathrm{i}\theta})\in\p\Om.$
	From the definition \eqref{mathcal_C} of the integral operator $\mathcal{C}$ and \eqnref{fund:deri:z}, we have
	\begin{align}\label{mathcal:C:pm}
	\mathcal{C}[\varphi_{\pm m}](z)
	&=\frac{1}{2\pi}\int_0^{2\pi}\left(-\sum_{n=1}^\infty\frac{1}{n}F_n'(z)\eta^{-n}\right)\eta^{\pm m}d\theta\\\notag
	&=-\frac{1}{m}F_m'(z)\mbox{ or }0.
	\end{align}
	Indeed, for a fixed $z\in\Om$ we have
	\begin{align}\notag
	\mathcal{C}[\varphi_{\pm m}](z)
	\ds&=\lim_{t\rightarrow 0^+}\frac{1}{2\pi}\int_{\p \Om}\frac{\varphi_{\pm m}(\zeta)}{z-\Psi((1+t)\eta)}\,\mathrm{d}\sigma(\zeta)\\\label{mathcal:C:pm:validation}
	\ds&= \lim_{t\rightarrow 0^+}\frac{-1}{2\pi}\int_{\p \Om} \sum_{n=1}^\infty\frac{1}{n}F_n'(z)((1+t)\eta)^{-n}\varphi_{\pm m}(\zeta)\,\mathrm{d}\sigma(\zeta)
	\end{align}
	As the power series in $w$ in \eqnref{fund:deri:z} is convergent for $|w|>\gamma$, we can exchange the order of the integral and summation in \eqnref{mathcal:C:pm:validation}, and thus in \eqnref{mathcal:C:pm}.

	By using the Laurent series expansion \eqref{eqn:extmapping} of $\Psi$, we then get
	\begin{align}
	\mathcal{C}\left[\overline{\zeta}\varphi_{\pm m}\right](z)
	&=\mathcal{C}\left[\overline{\Psi(\eta)}\eta^{\pm m}\frac{1}{h}\right](z)\notag \\
	&=\mathcal{C}\left[\sum_{k=-1}^\infty \overline{a_k}\eta^{k\pm m}\frac{1}{h}\right](z)
	=\sum_{k=-1}^\infty \overline{a_k}\,\mathcal{C}\left[\eta^{k\pm m}\frac{1}{h}\right](z).\label{C_overline_zeta_phinegm}
	\end{align}
	Recall that the map $\Psi(\rho,\theta)$ is $C^1$ on $[\rho_0,\infty)\times[0,2\pi)$.
	Since the Fourier series of a continuously differential function is uniformly convergent, we can exchange the order between the operator $\mathcal{C}$ and the summation in \eqnref{C_overline_zeta_phinegm}.
	This completes the proof.
\end{proof}
By plugging the relations in Lemma \ref{lemma:C:exp:inside} into \eqnref{Svarphi:expan}, we obtain the following theorem. 

\begin{theorem}
	The single-layer potential $S[\varphi](z)$ with the density function $\varphi$ given by \eqnref{varphi_expansion} admits the  following expansion, for $z\in\overline{\Om}$:
	{
		\begin{align}
		2 S[\varphi](z)
		=-&\alpha_1\sum_{m=1}^\infty \frac{1}{m} \Big(s_m^{(3)}+\mathrm{i}s_m^{(4)}\Big){F_m(z)}
		+\alpha_2z\sum_{m=1}^{\infty}\overline{\Big(s_m^{(3)}+\mathrm{i}s_m^{(4)}\Big)}\overline{\wtF_m(z)}\nonumber\\
		-&\alpha_1\sum_{m=1}^\infty \frac{1}{m} \Big(s_m^{(1)}+\mathrm{i}s_m^{(2)}\Big)\notag
		\overline{F_m(z)}\\
		-&\alpha_2\sum_{m=1}^\infty   \bigg[ \overline{\Big(s_m^{(1)}+\mathrm{i}s_m^{(2)}\Big)}\sum_{k=m+1}^\infty a_k \overline{\wtF_{k-m}(z)}+ \overline{\Big(s_m^{(3)}+\mathrm{i}s_m^{(4)}\Big)}\sum_{k=-1}^\infty a_k \overline{\wtF_{k+m}(z)}\bigg].\label{S_F_m_F'_m}
		\end{align}
	}
\end{theorem}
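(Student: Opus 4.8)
The plan is to prove the stated expansion by a direct substitution into the series representation \eqref{Svarphi:expan}, followed by a careful collection of terms. Concretely, I would start from \eqref{Svarphi:expan}, which already decomposes $2S[\varphi](z)$ into a piece built from $L[\varphi_{\pm m}]$, a piece built from $\overline{\mathcal{C}[\varphi_{\pm m}]}$ (multiplied by $-\alpha_2 z$), and a piece built from $\overline{\mathcal{C}[\overline{\zeta}\varphi_{\pm m}]}$. Each of these building blocks has an explicit restriction to $\overline{\Om}$: the single-layer-of-the-Laplacian terms satisfy $2L[\varphi_{-m}](z)=-\frac{1}{m}\overline{F_m(z)}$ and $2L[\varphi_m](z)=-\frac{1}{m}F_m(z)$ on $\overline{\Om}$ (recalled above from \cite{Jung:2018:SSM}), while the two Cauchy-type families are supplied by Lemma \ref{lemma:C:exp:inside}.

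Inserting the $L$-formulas into the first bracket of \eqref{Svarphi:expan} immediately produces the first and third sums of \eqref{S_F_m_F'_m}, namely $-\alpha_1\sum_{m}\frac{1}{m}(s_m^{(3)}+\mathrm{i}s_m^{(4)})F_m(z)$ and $-\alpha_1\sum_{m}\frac{1}{m}(s_m^{(1)}+\mathrm{i}s_m^{(2)})\overline{F_m(z)}$. For the $\overline{\mathcal{C}[\varphi_{\pm m}]}$ bracket I would use $\mathcal{C}[\varphi_{\pm m}](z)=-\wtF_{\pm m}(z)$; the key simplification here is that $\wtF_{-m}=0$ for $m\geq 1$ by the convention \eqref{def:twF}, so the $\varphi_{-m}$ contribution drops out entirely and only the $\varphi_m$ term survives. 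After conjugation this yields the second sum $+\alpha_2 z\sum_{m}\overline{(s_m^{(3)}+\mathrm{i}s_m^{(4)})}\,\overline{\wtF_m(z)}$.

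The last bracket requires the most bookkeeping. I would substitute $\mathcal{C}[\overline{\zeta}\varphi_{-m}](z)=-\sum_{k=-1}^\infty\overline{a_k}\,\wtF_{k-m}(z)$ and $\mathcal{C}[\overline{\zeta}\varphi_{m}](z)=-\sum_{k=-1}^\infty\overline{a_k}\,\wtF_{k+m}(z)$ from Lemma \ref{lemma:C:exp:inside}, then take complex conjugates (turning $\overline{a_k}$ into $a_k$ and each $\wtF$ into $\overline{\wtF}$) and multiply by $\alpha_2$. The convention $\wtF_{j}=0$ for $j\leq 0$ truncates the first inner sum to $k\geq m+1$, since $\wtF_{k-m}$ vanishes unless $k-m\geq 1$, while the second inner sum may be kept in the form $\sum_{k=-1}^\infty$, the terms with $k+m\leq 0$ being automatically zero. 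Collecting these contributions with the overall sign $-\alpha_2$ gives precisely the fourth sum of \eqref{S_F_m_F'_m}, which completes the identity.

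The main point requiring care — rather than a genuine obstacle — is the justification for interchanging the doubly infinite summations with the integral operators, which is exactly what licenses treating \eqref{Svarphi:expan} termwise. This has effectively been secured already: the series for $1/(\Psi(w)-z)$ in \eqref{fund:deri:z} and the Laurent expansion of $\overline{\Psi(\eta)}$ converge uniformly on the integration contour (as exploited in the proof of Lemma \ref{lemma:C:exp:inside}), and the Grunsky and Faber bounds \eqref{c:bound}--\eqref{Faber:bound} guarantee absolute convergence of the resulting Faber expansions on $\overline{\Om}$. With these facts in hand, the theorem reduces to the conjugation-and-index bookkeeping described above, and I expect no further analytic difficulty.
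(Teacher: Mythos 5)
Your proposal is correct and follows exactly the paper's own route: the paper proves this theorem in one line by plugging the interior formulas for $L[\varphi_{\pm m}]$ and Lemma \ref{lemma:C:exp:inside} into \eqnref{Svarphi:expan}, which is precisely your substitution-and-bookkeeping argument, including the key observation that $\wtF_{j}=0$ for $j\leq 0$ kills the $\varphi_{-m}$ Cauchy term and truncates the $k$-sum to $k\geq m+1$. Your added remarks on interchanging sums and integrals are consistent with how the paper justifies this inside the proofs of the lemmas, so there is nothing missing.
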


The derivative of the $m$-th Faber polynomial is a linear combination of the Faber polynomials of lower order, that is
\begin{equation}\label{F'm_general}
F'_m(z)=\sum_{j=0}^{m-1}\gamma_{m,j}F_j(z),
\end{equation}
where the coefficients $\gamma_{m,j}$, $j=0,\dots,m-1$, depend on the conformal mapping coefficients $a_k$ (see \eqref{eqn:extmapping}). In matrix form, \eqnref{F'm_general} takes the following expression
\beq\label{Fderi:F}
\mathbf{F}'=\boldsymbol{\Gamma}\,\mathbf{F}+\boldsymbol{\gamma_0},
\eeq
where
\beq\label{Gamma_F}
\mathbf{F}:=\left[\begin{array}{c}F_1\\
	F_2\\
	F_3\\
	\vdots
\end{array}
\right],\,\quad 
\boldsymbol{\Gamma}:=\left[\begin{array}{cccc}
	0&0&0&\cdots\\
	\gamma_{21}&0&0&\cdots\\
	\gamma_{31}&\gamma_{32}&0&\cdots\\
	\vdots&\vdots&\vdots&\ddots
\end{array}\right] \quad \text{and}\quad \boldsymbol{\gamma_0}:=\left[\begin{array}{c}\gamma_{10}\\
	\gamma_{20}\\
	\gamma_{30}\\
	\vdots
\end{array}
\right].
\eeq
By using \eqnref{F'm_general} or, equivalently \eqref{Fderi:F}, we can express $S[\varphi](z)$ \eqref{S_F_m_F'_m} in terms of Faber polynomials only (not their derivatives).

\subsubsection{Series expansion of the single-layer potential outside the inclusion}
We define 
\begin{align}
\label{def:Gk}
G_k(w)&:= \ds \frac{w^{k-1}}{\Psi'(w)}\quad \mbox{for }k\in\ZZ,\ |w|>\gamma,
\end{align}
which approximates $\wtF_k(z)$ given in \eqnref{def:twF}, acoording to \eqnref{eqn:Faberdefinition}:
\beq\notag
\widetilde{F}_k(\Psi(w))=G_k(w)+O\left(\frac{1}{|w|}\right)\quad\mbox{for each }k\in\ZZ.
\eeq

\begin{lemma}\label{lemma:operato:ext}
	
	For each $m\in\NN$, it holds that for $z=\Psi(w)\in\CC\setminus\overline{\Om}$,
	\begin{align}\label{C1:exterior:simple}
	\mathcal{C}\left[\varphi_{\pm m}\right](z)
	&=-\Big(\wtF_{\pm m}(z)-G_{\pm m}(w)\Big),\\ \label{C2:exterior:simple}
	\mathcal{C}\left[\overline{\zeta}\,\varphi_{\pm m}\right](z)
	&=- \sum_{k=-1}^\infty {\overline{a_k}}\Big( \wtF_{k\pm m}(z)-G_{k\pm m}(w)\Big).
	\end{align}
\end{lemma}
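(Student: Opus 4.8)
The plan is to evaluate both integrals directly by residue calculus; the essential difference from the interior computation in Lemma \ref{lemma:C:exp:inside} is that, for $z=\Psi(w)$ with $|w|>1$, the generating series \eqref{fund:deri:z} no longer converges on the integration contour, and accounting for this is exactly what produces the extra terms $G_{\pm m}(w)$ and $G_{k\pm m}(w)$. First I would recast the operator as a contour integral: parametrizing $\p\Om$ by $\zeta=\Psi(\eta)$ with $\eta=e^{\mathrm{i}\theta}$ (recall $\gamma=1$), so that $\mathrm{d}\sigma(\zeta)=h\,\mathrm{d}\theta$ and $\varphi_{\pm m}(\zeta)=\eta^{\pm m}/h$, the definition \eqref{mathcal_C} gives
\[
\mathcal{C}[\varphi_{\pm m}](z)=\frac{1}{2\pi}\int_0^{2\pi}\frac{\eta^{\pm m}}{z-\Psi(\eta)}\,\mathrm{d}\theta=\frac{1}{2\pi\mathrm{i}}\oint_{|\eta|=1}\frac{\eta^{\pm m-1}}{z-\Psi(\eta)}\,\mathrm{d}\eta .
\]
Since $z=\Psi(w)$ with $|w|>1$ and $\Psi$ is injective on $\{|\eta|>1\}$, the integrand is meromorphic on $\{|\eta|>1\}$ with a single simple pole, now located \emph{outside} the unit circle at $\eta=w$.

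Next I would push the contour outward past this pole. Fixing $R>|w|$, the residue theorem yields
\[
\frac{1}{2\pi\mathrm{i}}\oint_{|\eta|=1}\frac{\eta^{\pm m-1}}{z-\Psi(\eta)}\,\mathrm{d}\eta=\frac{1}{2\pi\mathrm{i}}\oint_{|\eta|=R}\frac{\eta^{\pm m-1}}{z-\Psi(\eta)}\,\mathrm{d}\eta-\operatorname{Res}_{\eta=w}\frac{\eta^{\pm m-1}}{z-\Psi(\eta)} .
\]
From $z-\Psi(\eta)=-\Psi'(w)(\eta-w)+O\big((\eta-w)^2\big)$ the residue equals $-w^{\pm m-1}/\Psi'(w)=-G_{\pm m}(w)$ by \eqref{def:Gk}, contributing $+G_{\pm m}(w)$. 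On $|\eta|=R>|w|$ the expansion \eqref{fund:deri:z} (whose radius of convergence in $\eta$ is now $|w|$) converges uniformly, so integrating term by term exactly as in Lemma \ref{lemma:C:exp:inside} retains only the coefficient of $\eta^{-1}$ and gives $-\wtF_{\pm m}(z)$. Combining the two contributions proves \eqref{C1:exterior:simple}. The identical argument with an arbitrary integer exponent $j$ in place of $\pm m$ gives $\mathcal{C}[\varphi_j](z)=-\big(\wtF_j(z)-G_j(w)\big)$ for every $j\in\ZZ$.

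For \eqref{C2:exterior:simple} I would reduce to the case just treated. On $|\eta|=1$ one has $\overline{\eta}=\eta^{-1}$, so \eqref{eqn:extmapping} gives $\overline{\zeta}=\overline{\Psi(\eta)}=\sum_{k=-1}^{\infty}\overline{a_k}\,\eta^{k}$ (with $a_{-1}=1$), whence $\overline{\zeta}\,\varphi_{\pm m}=\sum_{k=-1}^{\infty}\overline{a_k}\,\varphi_{k\pm m}$. Invoking the $C^1$ regularity of $\Psi$ to interchange $\mathcal{C}$ with this uniformly convergent sum—precisely as in Lemma \ref{lemma:C:exp:inside}—and applying the general-index formula above termwise yields \eqref{C2:exterior:simple}.

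The step I expect to require the most care is the justification of the contour deformation: one must confirm that $1/(z-\Psi(\eta))$ is analytic on the closed annulus $1\le|\eta|\le R$ apart from the simple pole at $\eta=w$, which rests on the injectivity of $\Psi$ on $\{|\eta|>1\}$ together with $z\notin\p\Om$. Everything else—the term-by-term integration on $|\eta|=R$ and the interchange of $\mathcal{C}$ with the Fourier expansion of $\overline{\Psi(\eta)}$—is routine given uniform convergence and follows the template already set in the proof of Lemma \ref{lemma:C:exp:inside}.
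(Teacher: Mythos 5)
Your proposal is correct, but it reaches the result by a genuinely different mechanism than the paper. The paper never deforms contours: it writes $\mathcal{C}[\varphi_{\pm m}](z)$ as a $t\to0^+$ limit over the dilated boundary $\wteta=(1+t)\eta$, expands $1/(\Psi(w)-\Psi(\wteta))$ via the generating relation \eqref{eqn:Fabergenerating} in powers of $w^{-1}$ (Faber polynomials evaluated at the near-boundary points), substitutes the Grunsky expansion \eqref{eqn:Faberdefinition}, and integrates term by term using Fourier orthogonality; the answer first appears as the Grunsky series $\frac{1}{\Psi'(w)}\sum_n \frac{n}{m}c_{m,n}w^{-n-1}$, which is then recognized as $-\frac{1}{m}\frac{1}{\Psi'(w)}\frac{d}{dw}\big(F_m(\Psi(w))-w^m\big)=-\big(\wtF_m(z)-G_m(w)\big)$, with convergence controlled by the bounds \eqref{Faber:bound} and \eqref{c:bound}. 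In your argument the correction term $G_{\pm m}(w)$ instead falls out as the residue at the pole $\eta=w$ crossed when pushing the contour to $|\eta|=R>|w|$, after which the far-contour integral is computed exactly as in the interior case; this is more conceptual (it explains \emph{why} the exterior formula differs from the interior one by precisely $G$, and it bypasses Grunsky coefficients entirely), and your uniform treatment of a general integer index $j$ streamlines the passage to \eqref{C2:exterior:simple}. One technical caveat: you assert analyticity of the integrand on the \emph{closed} annulus $1\le|\eta|\le R$, but under the paper's $C^{1,\alpha}$ assumption $\Psi$ is analytic only on $\{|\eta|>1\}$ and merely $C^1$ up to $|\eta|=1$ (Kellogg--Warschawski); so the deformation off the unit circle needs either the continuous-up-to-the-boundary version of Cauchy's theorem or an inner-limit argument on $|\eta|=1+t$, $t\to0^+$ --- the same limiting device the paper uses --- together with injectivity of $\Psi$ on the closed exterior to guarantee the only zero of $z-\Psi(\eta)$ is the simple one at $\eta=w$ (simple because $\Psi'(w)\neq0$ for a conformal map). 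With that adjustment your proof is complete.
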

\begin{proof}
	Like in the proof of Lemma \ref{lemma:C:exp:inside}, we parameterize $\p\Om$ by $\zeta=\Psi(\eta)=\Psi(e^{\mathrm{i}\theta})\in\p\Om$.
	From \eqnref{eqn:Fabergenerating}, \eqnref{eqn:Faberdefinition}, and the properties of Grunsky coefficients, we have
	\begin{align}
	\mathcal{C}\left[\varphi_{\pm m}\right](z)
	&=\lim_{t\rightarrow 0^+}\frac{1}{2\pi}\int_0^{2\pi}\frac{1}{\Psi(w)-\Psi(\wteta)}\, \eta^{\pm m}\,d\theta,\quad\wteta=(1+t)\eta, \notag \\ \label{C:phi:int1}
	&=\lim_{t\rightarrow 0^+}\frac{1}{2\pi}\int_0^{2\pi}\sum_{n=1}^\infty F_n(\Psi(\wteta))\frac{w^{-n-1}}{\Psi'(w)}\, \eta^{\pm m}\,d\theta\\\label{C:phi:int2}
	&= \lim_{t\rightarrow 0^+}\frac{1}{2\pi}\sum_{n=1}^\infty\int_0^{2\pi}\bigg(\wteta^n+\sum_{k=1}^\infty\frac{n}{k}c_{k,n}\wteta^{-k}\bigg)\frac{w^{-n-1}}{\Psi'(w)}\, \eta^{\pm m}\,d\theta,
	\end{align}
	We assume $1+t<|w|$ so that from \eqnref{Faber:bound}, the infinite series in \eqnref{eqn:Fabergenerating} is uniform with respect to $z=\Psi(\wteta)$ with $|\wteta|=1+t$. Hence, one can exchange the order of integration and summation in \eqnref{C:phi:int1}. Furthermore, from \eqnref{c:bound}, the integrand in \eqnref{C:phi:int2} is uniformly convergent in $\eta$ so that one can again exchange the order of integration and summation in \eqnref{C:phi:int2}. 
	Therefore, we have
	\begin{align*}
	\mathcal{C}\left[\varphi_{m}\right](z)
	&=\frac{1}{\Psi'(w)}\sum_{n=1}^\infty \frac{n}{m} c_{m,n}w^{-n-1}\\
	&=-\frac{1}{m}\frac{1}{\Psi'(w)}\frac{d}{dw}\Big(F_m(\Psi(w))-w^m\Big)
	\end{align*}
	and
	\begin{align*}
	\mathcal{C}\left[\varphi_{-m}\right](z)
	&=\frac{w^{-m-1}}{\Psi'(w)}.
	\end{align*}
	From the definition of $\wtF_k(z)$ and $G_k(w)$, we have \eqnref{C1:exterior:simple}.
	Using \eqref{C_overline_zeta_phinegm}, one can then easily find the the expansions of $\mathcal{C}\left[\overline{\zeta}\,\varphi_{\pm m}\right](w)$.
\end{proof}

Following the same steps taken for the series expansion of the single layer potential in the interior of the inclusion, one has just to plug \eqnref{C1:exterior:simple} and \eqnref{C2:exterior:simple} into \eqnref{Svarphi:expan} to find the expansion for $S[\varphi](z)$ in \eqref{single_layer} with $z=\Psi(w)$ in the exterior of the inclusion. The result is presented in the following theorem.
\begin{theorem}
	The single-layer potential $S[\varphi](z)$ with the density function $\varphi$ given by \eqnref{varphi_expansion} admits the  following expansion, for $z=\Psi(w)\in\CC\setminus\overline{\Om}$:

	\begin{equation}\label{S_exterior} 
	S_{ext}[\varphi](z)=-\alpha_1 v_1(w)+\alpha_2 \Psi(w)\, \overline{ v_2(w)}-\alpha_2 \overline{v_3(w)},	\end{equation}
	where 
	\begin{align*}
	v_1&=
	\sum_{m=1}^\infty \frac{1}{m}\bigg[ \Big(s_m^{(1)}+\mathrm{i}s_m^{(2)}\Big)
	\Big(\overline{F_m(z)}-\overline{w^m}+w^{-m}\Big)+ \Big(s_m^{(3)}+\mathrm{i}s_m^{(4)}\Big)
	\Big({F_m(z)}-{w^m}+\overline{w^{-m}}\Big)\bigg],\\[1mm]
	v_2&= \sum_{m=1}^\infty \bigg[{\Big(s_m^{(1)}+\mathrm{i}s_m^{(2)}\Big)}\left(\wtF_{-m}(z)- G_{-m}(w)\right)
	+ {\Big(s_m^{(3)}+\mathrm{i}s_m^{(4)}\Big)}\Big(\wtF_m(z)-G_m(w)\Big)\bigg],\\[1mm]
	v_3&=\sum_{m=1}^\infty  \bigg[\Big(s_m^{(1)}+\mathrm{i}s_m^{(2)}\Big)
	\sum_{k=-1}^\infty \overline{a_k}\Big({\wtF_{k-m}(z)} -{G_{k-m}(w)}\Big)
	+ {\Big(s_m^{(3)}+\mathrm{i}s_m^{(4)}\Big)}\sum_{k=-1}^\infty \overline{a_k} \Big({\wtF_{k+m}(z)} -{G_{k+m}(w)} \Big)\bigg]
	\end{align*}
	with $\wtF_k(z)$ and $G_k(w)$ given by \eqnref{def:twF} and \eqnref{def:Gk}, respectively.
\end{theorem}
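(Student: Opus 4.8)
The plan is to assemble the claimed expansion by substituting the exterior closed forms of the three elementary operators directly into the master formula \eqref{Svarphi:expan}, exactly as was done for the interior expansion. Recall that \eqref{Svarphi:expan} already incorporates the density expansion \eqref{varphi_expansion}; it writes the single-layer potential as an $\alpha_1$-weighted sum of the terms $L[\varphi_{\pm m}](z)$, minus an $\alpha_2 z$-weighted sum of the terms $\overline{\mathcal{C}[\varphi_{\pm m}](z)}$, plus an $\alpha_2$-weighted sum of the terms $\overline{\mathcal{C}[\overline{\zeta}\varphi_{\pm m}](z)}$. For $z=\Psi(w)\in\CC\setminus\overline{\Om}$ each of these three pieces is available in closed form: the exterior expansions of $L[\varphi_{\pm m}]$ (the displayed case formulas preceding Lemma \ref{lemma:C:exp:inside}, taken from \cite{Jung:2018:SSM}) and the two identities \eqref{C1:exterior:simple}--\eqref{C2:exterior:simple} of Lemma \ref{lemma:operato:ext}. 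The proof is then the bookkeeping of collecting these substitutions into the three groups $v_1,v_2,v_3$.

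I would treat the three summands of \eqref{Svarphi:expan} in turn. Inserting $2L[\varphi_{-m}](z)=-\frac{1}{m}\bigl(\overline{F_m(z)}-\overline{w^m}+w^{-m}\bigr)$ together with the companion formula for $2L[\varphi_m]$ into the $\alpha_1$-summand reproduces exactly the first term $-\alpha_1 v_1$ of \eqref{S_exterior}. For the second summand I substitute $\mathcal{C}[\varphi_{\pm m}](z)=-\bigl(\wtF_{\pm m}(z)-G_{\pm m}(w)\bigr)$ from \eqref{C1:exterior:simple}; since this operator enters under a complex conjugation, pulling the conjugate outside the $m$-sum combines the coefficients $\overline{s_m^{(1)}+\mathrm{i}s_m^{(2)}}$, $\overline{s_m^{(3)}+\mathrm{i}s_m^{(4)}}$ with $\overline{\wtF_{\pm m}-G_{\pm m}}$ into $\overline{v_2(w)}$, so (using $z=\Psi(w)$) this summand becomes $\alpha_2\Psi(w)\,\overline{v_2(w)}$. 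Finally, substituting \eqref{C2:exterior:simple} into the third summand and again moving the conjugation outside yields $-\alpha_2\,\overline{v_3(w)}$, where the conjugation sends the coefficients $a_k$ inside $\mathcal{C}[\overline{\zeta}\varphi_{\pm m}]$ back to $\overline{a_k}$ and the inner $k$-summation assembles precisely the double sum defining $v_3$. Summing the three contributions gives \eqref{S_exterior}.

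The substitution itself is purely algebraic, so the only place requiring care --- and hence the main obstacle --- is the legitimacy of the term-by-term operations on the infinite series. Two manipulations must be justified: first, pulling the complex conjugation through the infinite $m$-sums to form $\overline{v_2}$ and $\overline{v_3}$; and second, interchanging the $m$- and $k$-summations inside the $v_3$ term. Both reduce to absolute (and locally uniform in $w$) convergence of the relevant double series, which I would establish from the Faber-polynomial bound \eqref{Faber:bound} and the Grunsky bound \eqref{c:bound}, together with the geometric decay of the auxiliary functions $G_k(w)$ for $|w|>\gamma=1$ that is already encoded in the difference $\wtF_k-G_k$ in Lemma \ref{lemma:operato:ext}. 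Granting these convergence facts, the rearrangements are valid and the three grouped sums coincide with $v_1,v_2,v_3$ as stated, completing the proof.
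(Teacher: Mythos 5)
Your proposal is correct and takes essentially the same route as the paper, whose proof consists precisely of plugging the exterior case formulas for $L[\varphi_{\pm m}]$ and the identities \eqnref{C1:exterior:simple}--\eqnref{C2:exterior:simple} of Lemma \ref{lemma:operato:ext} into \eqnref{Svarphi:expan} and collecting the three groups of terms into $v_1$, $v_2$, $v_3$. The convergence justifications you flag (conjugation through the sums, interchange of the $m$- and $k$-summations) are not repeated by the paper at this point; they are absorbed into the proofs of the lemmas via \eqnref{Faber:bound} and \eqnref{c:bound}, exactly as you indicate.
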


\subsection{Determination of the density function via the transmission condition}\label{section:determination_s}
{The single-layer potential, \eqnref{single_layer}, is continuous on $\p \Om$. In particular, the expansions \eqnref{S_F_m_F'_m} and \eqnref{S_exterior} coincide when $|w|=1$.}
In order to solve the rigid inclusion problem, we have first to find the density function $\varphi(z)$ that satisfies the transmission condition \eqnref{trans_cond_S_u0}, that is, we have to determine the unknown real coefficients $s_m^{(j)}$, $m\in\mathbb{Z}$, $j=1,2,3,4$ in the expansion \eqref{varphi_expansion} of $\varphi(z)$, by comparing the expansion of the single-layer potential, \eqnref{S_F_m_F'_m}, with the expansion of the far-field loading \eqref{u_0_complete_expansion} via \eqnref{trans_cond_S_u0}. We recall that the real coefficients $c_j$, $j=1,2,3$ in \eqnref{trans_cond_S_u0} are implicitly determined by the equilibrium condition \eqref{equilibrium_boundary} for which the solution of the transmission problem has first to be found. 

The strategy to determine the coefficients $s_m^{(j)}$, $m\in\mathbb{Z}$, $j=1,2,3,4$ is the following. We will first determine the coefficients $s_m^{(3)}$ and $s_m^{(4)}$, $m\in\mathbb{Z}$. Below we will show that,  for $m>1$, such coefficients are completely determined by the coefficients of the series expansion \eqref{u_0_complete_expansion} of the far-field loading $u_0(z)$. The determination of $s_1^{(3)}$ and $s_1^{(4)}$, instead, as well as the determination of the coefficients $s_m^{(1)}$ and $s_m^{(2)}$, $m\in\mathbb{Z}$, requires the knowledge of the constant $c_3$, which we will derive at the end of this section by using the linear dependence of $s_m^{(1)}$ and $s_m^{(2)}$, $m\in\mathbb{Z}$ on $c_3$. 

\subsubsection{Determination of the coefficients $s_m^{(3)}$ and $s_m^{(4)}$, $m\in\mathbb{Z}$}

By using the transmission condition \eqnref{trans_cond_S_u0} and comparing the analytic part, that is $\phi$ in the decomposition of the form \eqnref{rep_u_kappa}, in \eqnref{S_F_m_F'_m} and \eqref{u_0_complete_expansion}, we get
\begin{align}
s_1^{(3)}+\mathrm{i}s_1^{(4)}&=\frac{1}{\alpha_2}A_1+\frac{1}{\alpha_1}2\mathrm{i}c_3,\label{s_3_4:m1}\\
\label{s_3_4}
s_m^{(3)}+\mathrm{i}s_m^{(4)}&=\frac{1}{\alpha_2} mA_m\quad\mbox{for each }m\geq2.
\end{align}
Hence, all the coefficients $s_m^{(3)}$ and $s_m^{(4)}$ for $m\geq 2$ are explicitly given by the coefficients $A_m$ of the series expansion \eqref{u_0_complete_expansion} of the far-field loading $u_0(z)$. The case $m=1$ is more complex, due to the fact that $c_3$ in \eqref{s_3_4:m1} is unknown. However, it will be determined by solving a linear equation as explained in Section \ref{subsection_c3}.

\subsubsection{Determination of the coefficients $s_m^{(1)}$ and $s_m^{(2)}$, $m\in\mathbb{Z}$}

By using the relations \eqnref{s_3_4:m1} and \eqnref{s_3_4}, as well as \eqnref{u_0_complete_expansion} and \eqnref{S_F_m_F'_m}, the transmission condition \eqnref{trans_cond_S_u0} on $\partial\Omega$ turns into the following.

{
	\begin{problem*}[A] Find $s_m^{(1)}$ and $s_m^{(2)}$, $m\in\ZZ$ satisfying
		\beq\label{eqn:withJ}
		\mathcal{P}(z)=  c_3 J_1(z)+J_2(z) +C,
		\eeq
		where 
		\beq
		\mathcal{P}(z)
		=-\alpha_1\sum_{m=1}^{\infty}\frac{1}{m}{\left(s_m^{(1)}+\mathrm{i}s_m^{(2)}\right)}\overline{F_m(z)}-\alpha_2\sum_{m=1}^{\infty}\overline{\left(s_m^{(1)}+\mathrm{i}s_m^{(2)}\right)}\sum_{k=m+1}^\infty a_k\overline{\wtF_{k-m}(z)} \label{oper:P}
		\eeq
		and
		\begin{align}
		J_1(z)&={\,-\mathrm{i}\,\frac{2}{\kappa}\sum_{k=0}^\infty a_k \overline{\wtF_{k+1}(z)},\notag }\\
		J_2(z)&=\sum_{m=1}^{\infty}\overline{B_m}\,\overline{F_m(z)}
		+\sum_{m=1}^{\infty}m\overline{A_m}\sum_{k=-1}^\infty  a_k\overline{\wtF_{k+m}(z)},\label{def:J1J2}\\
		C&=-\kappa A_0+\overline{B_0}+2c_1+2\mathrm{i}c_2- 2\mathrm{i} c_3 a_0.\label{def:C}
		\end{align}
	\end{problem*}
}
Note that $\mathcal{P}(z)$ is the only complex function that contains the unknowns  $s_m^{(1)}$ and $s_m^{(2)}$, $m\in\mathbb{Z}$, whereas $J_1(z)$ and $J_2(z)$ are known complex functions. Indeed,  since $\Om$ is given, we can compute the Faber polynomials $F_m(z)$ associated with $\Om$ via \eqref{eqn:Fabergenerating}, as well as their derivatives (or, equivalently, the coefficients $\gamma_{mj}$ in \eqref{F'm_general}). Hence, $J_1(z)$ is determined. 
Since $u_0(z)$ is given, $J_2(z)$ is also completely determined.

For the sake of clarity, we adopt a matrix notation. Then, let us denote with $\mathbf{s}$ the vector containing the unknown coefficients $s_m^{(1)}$ and $s_m^{(2)}$, $m\geq 1$, i.e., 
$$\mathbf{s}:=
\left[\begin{array}{c}
s_1^{(1)}+\mathrm{i}s_1^{(2)}\\[1mm]
s_2^{(1)}+\mathrm{i}s_2^{(2)}\\[1mm]
\vdots
\end{array}
\right].$$
To stress the fact that $\mathcal{P}(z)$ is an operator acting on the unknown vector $\mathbf{s}$, let us use the notation $\mathcal{P}\left[\mathbf{s}\right](z)$. To write $\mathcal{P}\left[\mathbf{s}\right](z)$ in matrix form, let us start by the term
{
	$$b_m := \sum_{k=m+1}^\infty a_k\overline{\wtF_{k-m}(z)}=\sum_{k=m+1}^\infty\frac{a_k}{k-m}\overline{F'_{k-m}(z)},\quad m\geq1,$$ }
which, in matrix notation, reads
\begin{equation}\label{b_matrix_form_F'}
\mathbf{b}=\mathbf{A\,D}\,\overline{\mathbf{F}'},
\end{equation}
where $\mathbf{F}$ is given by \eqref{Gamma_F}, and
\begin{equation}\label{b_A_D_def}\mathbf{b}:=\left[\begin{array}{c}
b_1\\
b_2\\
b_3\\
\vdots
\end{array}
\right], \quad\mathbf{A}:=\left[\begin{array}{cccc}
a_2&a_3&a_4&\cdots\\
a_3&a_4&a_5&\cdots\\
a_4&a_5&a_6&\cdots\\
\vdots&\vdots&\vdots&\ddots
\end{array}
\right], \quad\mathbf{D}:=
\left[\begin{array}{cccc}
1&0&0&\cdots\\
0&\frac{1}{2}&0&\cdots\\
0&0&\frac{1}{3}&\cdots\\
\vdots&\vdots&\vdots&\ddots
\end{array}\right].
\end{equation}
By using \eqnref{Fderi:F}, \eqref{b_matrix_form_F'} turns into
\begin{equation}\label{b:decomp}
\mathbf{b}
=\mathbf{A\,D}\left(\overline{\boldsymbol{\Gamma}}\,\overline{\mathbf{F}}+\overline{\boldsymbol{\gamma_0}}\right),
\end{equation}
and, consequently, the operator $\mathcal{P}\left[\mathbf{s}\right](z)$ in \eqnref{oper:P} reads
\beq
\mathcal{P}\left[\mathbf{s}\right](z)
=-\alpha_2\left(\kappa\,\mathbf{s}^T\mathbf{D}+\overline{\mathbf{s}}^T\mathbf{A\,D\,}\overline{\boldsymbol{\Gamma}}\right)\overline{\mathbf{F}}-\alpha_2\,\overline{\mathbf{s}}^T\mathbf{A\,D\,}\overline{\boldsymbol{\gamma_0}}.
\eeq
In order to write the right hand side of \eqref{eqn:withJ} in matrix form, let us introduce the vector $\mathbf{y}$, defined as the vector containing the coefficients of the expansion of the function 
$$J(z)=c_3J_1(z)+J_2(z)+C$$
with respect to the basis functions $\overline{F_m}$ (up to the coefficient $-\alpha_2$), that is
\beq\label{def:mathbfy}
J(z)=-{\alpha_2}\,\mathbf{y}^T\overline{ \mathbf{F}}-\alpha_2\,j_0.
\eeq
Note that the unknown constants $c_1$ and $c_2$ are incorporated in the constant $j_0$, whereas the constant $c_3$ appears linearly both in $\mathbf{y}$ and $j_0$, due to the linear dependence of $J(z)$ on $c_3$. 
Hence, equation \eqref{eqn:withJ}  turns into
\beq\label{full_eq_s}
\left(\kappa\,\mathbf{s}^T\mathbf{D}+\overline{\mathbf{s}}^T\mathbf{A\,D\,}\overline{\boldsymbol{\Gamma}}\right)\overline{\mathbf{F}}+\overline{\mathbf{s}}^T\mathbf{A\,D\,}\overline{\boldsymbol{\gamma_0}}=\mathbf{y}^T\overline{ \mathbf{F}}+j_0
\eeq
and by comparing the coefficients of the series expansion with respect to the Faber polynomials, we get
\beq\kappa\,{\mathbf{s}}^T\mathbf{D}+\overline{\mathbf{s}}^T\mathbf{A\,D\,}\overline{\boldsymbol{\Gamma}}
=\mathbf{y}^T\label{matrix_form_eqn_s},\eeq
\beq\label{eqn:def:j0}
\overline{\mathbf{s}}^T\mathbf{A\,D\,}\overline{\boldsymbol{\gamma_0}}=j_0.
\eeq

As we will show explicitly in Section \ref{sec:examples}, for domains of order up to 2, the matrix $\mathbf{A}$ defined in \eqref{b_A_D_def} is the zero matrix and, therefore, the coefficients $s_m^{(1)}$ and $s_m^{(2)}$ are easily found in terms of the unknown constant $c_3$  as \eqref{matrix_form_eqn_s} reduces to $\kappa\,\mathbf{D\,s}={\mathbf{y}}$, in which the matrix $\mathbf{D}$ defined in \eqref{b_A_D_def}  is invertible, and the vector ${\mathbf{y}}$ incorporates $c_3$.
For domains of higher order, the coefficients $s_m^{(1)}$ and $s_m^{(2)}$ are found as follows.
By taking the transpose of \eqref{matrix_form_eqn_s} and then the complex conjugate of the transpose, we respectively get
\begin{align}
\kappa\,\mathbf{D}\,\mathbf{s}+\overline{\boldsymbol{\Gamma}}^T\mathbf{D}\,{\mathbf{A}}\,\overline{\mathbf{s}}&={\mathbf{y}},\notag\\
{\boldsymbol{\Gamma}^T}{\mathbf{D}}\,\overline{\mathbf{A}}\,\mathbf{s} + \kappa\,\mathbf{D}\,\overline{\mathbf{s}}&= \overline{\mathbf{y}},\label{eqn:x:conj}
\end{align}
{which leads to the reformulation of Problem (A) in a block matrix form:
	\begin{problem*}[A$^\prime$] Find $\mathbf{s}$ satisfying
		\beq
		\left[\begin{array}{cc}
			\kappa\,\mathbf{D}&\overline{\boldsymbol{\Gamma}}^T\mathbf{D\,A}\\[3mm]
			\boldsymbol{\Gamma}^T\mathbf{D}\,\overline{\mathbf{A}}&\kappa\,\mathbf{D}
		\end{array}
		\right]
		\left[
		\begin{array}{c}
			\mathbf{s}\\[3mm]
			\overline{\mathbf{s}}
		\end{array}
		\right]
		=\left[
		\begin{array}{c}
			\mathbf{y}\\[3mm]
			\overline{\mathbf{y}}
		\end{array}
		\right],\label{matrix_block_system}
		\eeq
		where $\mathbf{y}$ is given by \eqnref{def:mathbfy}.
	\end{problem*}
	The invertibility of the block matrix in \eqref{matrix_block_system} has to be assessed case by case. Specifically, it is invertible if $\kappa^2 \mathbf{I}-(\boldsymbol{\Gamma}^T\mathbf{D}\,\overline{\mathbf{A}}\,\mathbf{D}^{-1})(\overline{\boldsymbol{\Gamma}}^T\,\mathbf{D}\,\mathbf{A}\mathbf{D}^{-1})$ is invertible (note that $\mathbf{D}$, defined in \eqref{b_A_D_def}, is invertible).} In Section \ref{sec:examples} we will show some explicit examples in which it can be proven the matrix is invertible. 
	{Again, we stress the fact that the vector $\mathbf{y}$ depends linearly on the constant $c_3$, i.e., $\mathbf{y}=c_3\mathbf{y}_1+\mathbf{y}_2$ with $\mathbf{y}_1$ and $\mathbf{y}_2$ corresponding to $J_1(z)$ and $J_2(z)$, respectively. Hence, the solution \textbf{s}, upon inversion of the block matrix, is
\begin{equation}\label{s_linear_dep_c3}
\mathbf{s}=c_3\mathbf{u}_1+\mathbf{u}_2,
\end{equation}
where $\mathbf{u}_1$ and $\mathbf{u}_2$ are solutions to \eqnref{matrix_block_system} with $\mathbf{y}_1$ and $\mathbf{y}_2$ in the place of $\mathbf{y}$, respectively. 
}

\subsubsection{Determination of the constants $c_1, c_2, c_3$}\label{subsection_c3}

The linear dependence of $\mathbf{s}$ on the constant $c_3$ in \eqref{s_linear_dep_c3} plays a crucial role in the determination of $c_3$. Indeed, by combining \eqnref{s_1_4:cond} and \eqnref{s_3_4:m1} we get
\beq\label{S14:cond}
\mbox{Im}\left(\mathbf{s}\cdot \overline{\mathbf{a}}\right)=-\frac{\mbox{Im}(A_1)}{\alpha_2}-\frac{2c_3}{\alpha_1},\eeq
where \textbf{a} is the vector containing the coefficients of the conformal mapping \eqref{eqn:extmapping}, that is, $\mathbf{a}^{T}=[a_1; a_2; a_3;\dots]$. By using \eqref{s_linear_dep_c3}, we get
\begin{equation}\label{c_3}
c_3=\kappa\frac{-{\mbox{Im}(A_1)}-\alpha_2\mbox{Im}(\mathbf{u}_2\cdot \overline{ \mathbf{a}})}{{2}+{\alpha_1}\mbox{Im}(\mathbf{u}_1\cdot\overline{\mathbf{a}})}.
\end{equation}
Once $c_3$ and, consequently the vector $\mathbf{s}$, are known, the coefficients $c_1$ and $c_2$ are then found by using equation \eqref{eqn:def:j0}. 

\subsection{Solution expansion in the exterior of the inclusion}\label{subsection_Solution}
Due to the ansatz \eqref{relation_u_u0_S}, the displacement field outside the inclusion is given by 
\begin{equation}
\label{solution}
u(z)=u_0(z)+\Scal_{ext}[\varphi](z)\quad \text{with }z=\Psi(w)\in\mathbb{C}\setminus\overline{\Omega},
\end{equation} 
where $u_0(z)$ is the given far-field loading, and $\Scal_{ext}[\varphi](z)$ is the expansion \eqref{S_exterior} of the single layer potential in the exterior of the inclusion, in which the coefficients $s_m^{(j)}$, $j=1,2,3,4$, $m\in\mathbb{Z}$ are determined as explained in Section \ref{section:determination_s}. 



\section{Examples}\label{sec:examples}

In order to provide an explicit expression for the solution \eqref{solution} of the transmission problem, the coefficients $s_m^{(j)}$, $j=1,2,3,4$ have to be determined explicitly: equation \eqref{s_3_4} allows one to obtain all $s_m^{(3)}$ and $s_m^{(4)}$ for $m>1$ explicitly in terms of the coefficients $A_m$ of the series expansion \eqref{u_0_complete_expansion} of the far-field loading $u_0(z)$, whereas the case $m=1$, see equation \eqref{s_3_4:m1}, requires the knowledge of the constant $c_3$, which can be found when the block matrix in \eqref{matrix_block_system}  is invertible. In such a case, the coefficients $s_m^{(1)}$ and $s_m^{(2)}$, $m\in\mathbb{Z}$ are completely determined as well. The inversion of the matrix in \eqref{matrix_block_system}  is ensured for domains of order up to 3, whereas for domains of higher degree some numerical computations are necessary.

\subsection{Elliptic inclusion with an arbitrary far-field loading }

Let us start by considering the case in which the inclusion is an ellipse. Consequently, the exterior conformal mapping \eqref{eqn:extmapping} takes the following expression:
\begin{equation*}
\Psi(w)=w+\frac{a}{w}.
\end{equation*}

For this case, one can obtain a simple formula for the Faber polynomials and their derivatives by applying the recursive formula \eqref{eqn:Faberrecursion}, with $a_{-1}=1$, $a_1=a$, and $a_j=0$ for all $j\neq \pm 1$. Indeed, after some algebra, one gets
\begin{align*}
\nonumber&F_0(z)=1\\&
F_m(z)=\frac{1}{2^m}\left[\left(z+\sqrt{z^2-4a}\right)^m+\left(z-\sqrt{z^2-4a}\right)^m\right],\quad m=1,2,\dots
\end{align*}
Upon derivation one obtains
\begin{equation*}
F_m'(z)=\frac{m}{2^m\sqrt{z^2-4a}}\left[\left(z+\sqrt{z^2-4a}\right)^m-\left(z-\sqrt{z^2-4a}\right)^m\right]
\end{equation*}
and by using the formula
$$C^m-D^m=(C-D)\sum_{j=0}^{m-1}C^{m-j-1}D^j$$
one can derive an explicit formula for $F_m'(z)$. After some lengthy algebra, we obtain
\begin{equation*}
F_m'(z)=
\begin{cases}
m\sum_{j=0}^{\frac{m-1}{2}}a^{\frac{m-1}{2}-j}F_{2j}\quad&m\mbox{ odd},\\
\\
m\sum^{\frac{m}{2}}_{j=1}a^{\frac{m}{2}-j}F_{2j-1}\quad&m\mbox{ even}.
\end{cases}
\end{equation*}
Therefore, the matrix $\boldsymbol{\Gamma}$ and the vector $\boldsymbol{\gamma_0}$ in the matrix equation \eqref{Fderi:F} take the following explicit expression:
{
	\beq\label{Gamma_F_ellipse}
	\boldsymbol{\Gamma}=\left[\begin{array}{ccccccccc}
		\hskip .16cm 0 \hskip .16cm&\hskip .16cm 0 \hskip .16cm&\hskip .16cm 0 \hskip .16cm&\hskip .16cm 0 \hskip .16cm&\hskip .16cm 0 \hskip .16cm &\hskip .16cm 0 \hskip .16cm &\hskip .16cm 0 \hskip .16cm&\hskip .16cm 0 \hskip .16cm &\hskip .1cm\cdots \hskip .1cm\\[.02cm]
		2&0&0&0&0&0&0&0&\cdots\\[.02cm]
		0&3&0&0&0&0&0&0&\cdots\\[.02cm]
		4a&0&4&0&0&0&0&0&\cdots\\[.02cm]
		0&5a&0&5&0&0&0&0&\cdots\\[.02cm]
		6a^2&0&6a&0&6&0&0&0&\cdots\\[.02cm]
		0&7a^2&0&7a&0&7&0&0&\cdots\\[.02cm]
		8a^3&0&8a^2&0&8a&0&8&0&\cdots\\[.02cm]
		0&9a^3&0&9a^2&0&9a&0&9&\cdots\\[.02cm]
		\vdots&\vdots&\vdots&\vdots&\vdots&\vdots&\vdots&\vdots&\ddots
	\end{array}\right] \quad \text{and}\quad 
	\boldsymbol{\gamma_0}=\left[\begin{array}{c}
		1\\[.02cm]
		0\\[.02cm]
		3a\\[.02cm]
		0\\[.02cm]
		5a^2\\[.02cm]
		0\\[.02cm]
		7a^3\\[.02cm]
		0\\[.02cm]
		9a^4\\[.02cm]
		\vdots
	\end{array}
	\right].
	\eeq
}


As already mentioned, the coefficients $s_m^{(3)}$ and $s_m^{(4)}$, $m>1$, are explicitly given by \eqref{s_3_4} in terms of the coefficients $A_m$ of the series expansion \eqref{u_0_complete_expansion} of the far-field loading $\mathbf{u}_0$, here supposed to be arbitrary. 

{For what concerns the coefficients $s_m^{(1)}$ and $s_m^{(1)}$, we have that, for this case, 
	the operator $\mathcal{P}\left[\mathbf{s} \right](z)$ reads
	$$\mathcal{P}\left[\mathbf{s} \right](z)=-\alpha_1\sum_{m=1}^{\infty}\frac{1}{m}{\left(s_m^{(1)}+\mathrm{i}s_m^{(2)}\right)}\overline{F_m(z)},$$
	whereas the known complex functions in the right-hand side of equations \eqref{eqn:withJ} take the following expression:
	\begin{align}
	J_1(z)&={-\mathrm{i}\frac{a}{\kappa}\, \overline{F_2'(z)},}\\
	J_2(z)&=\sum_{m=1}^{\infty}\overline{B_m}\,\overline{F_m(z)}
	+\frac{a}{2}\overline{A_1}\, \overline{F_{2}'(z)}
	+\sum_{m=2}^{\infty}m\overline{A_m}\left(  \frac{{\overline{F_{-1+m}'(z)}}}{-1+m}+\frac{{a\,\overline{F_{1+m}'(z)}}}{1+m}\right).\label{J_ellipse}
	\end{align}
	Since $\mathbf{A}=\mathbf{0}$, the problem in matrix form, Problem (A$^\prime$) \eqnref{matrix_block_system}, turns into
	\beq\notag
	\left[\begin{array}{cc}
		\kappa\,\mathbf{D}&\mathbf{0}\\[3mm]
		\mathbf{0}&\kappa\,\mathbf{D}
	\end{array}
	\right]
	\left[
	\begin{array}{c}
		\mathbf{s}\\[3mm]
		\overline{\mathbf{s}}
	\end{array}
	\right]
	=\left[
	\begin{array}{c}
		\mathbf{y}\\[3mm]
		\overline{\mathbf{y}}
	\end{array}
	\right],
	\eeq
	in which the matrix \textbf{D}, given by \eqref{b_A_D_def}, is invertible and $\mathbf{y}$ is given by \eqnref{def:mathbfy}.}
Upon inversion, the coefficients $s_m^{(1)}$ and $s_m^{(1)}$ can be written as \eqref{s_linear_dep_c3}, and the constant $c_3$ is found by means of equation \eqref{c_3}. 


%

In figure \ref{fig:ellipse}, we provide the graph of the density function as well as the level curves of the single layer potential when the far-field loading is linear. 
\begin{figure}[h!]
\begin{subfigure}[b]{0.5\textwidth}
	\centering
	\includegraphics[width=\linewidth]{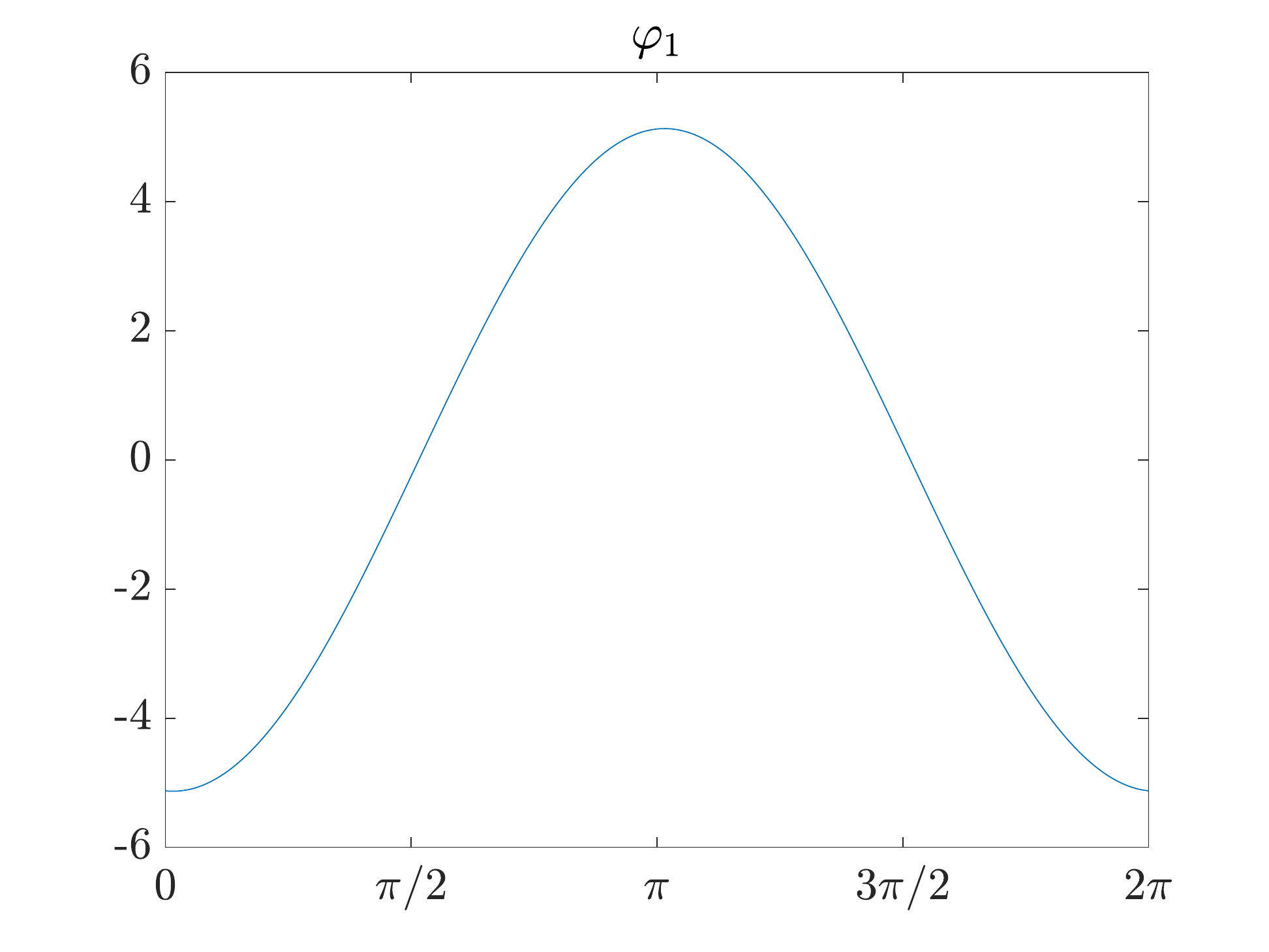}
	\caption{}
\end{subfigure}
\begin{subfigure}[b]{0.5\textwidth}
	\centering
	\includegraphics[width=\linewidth]{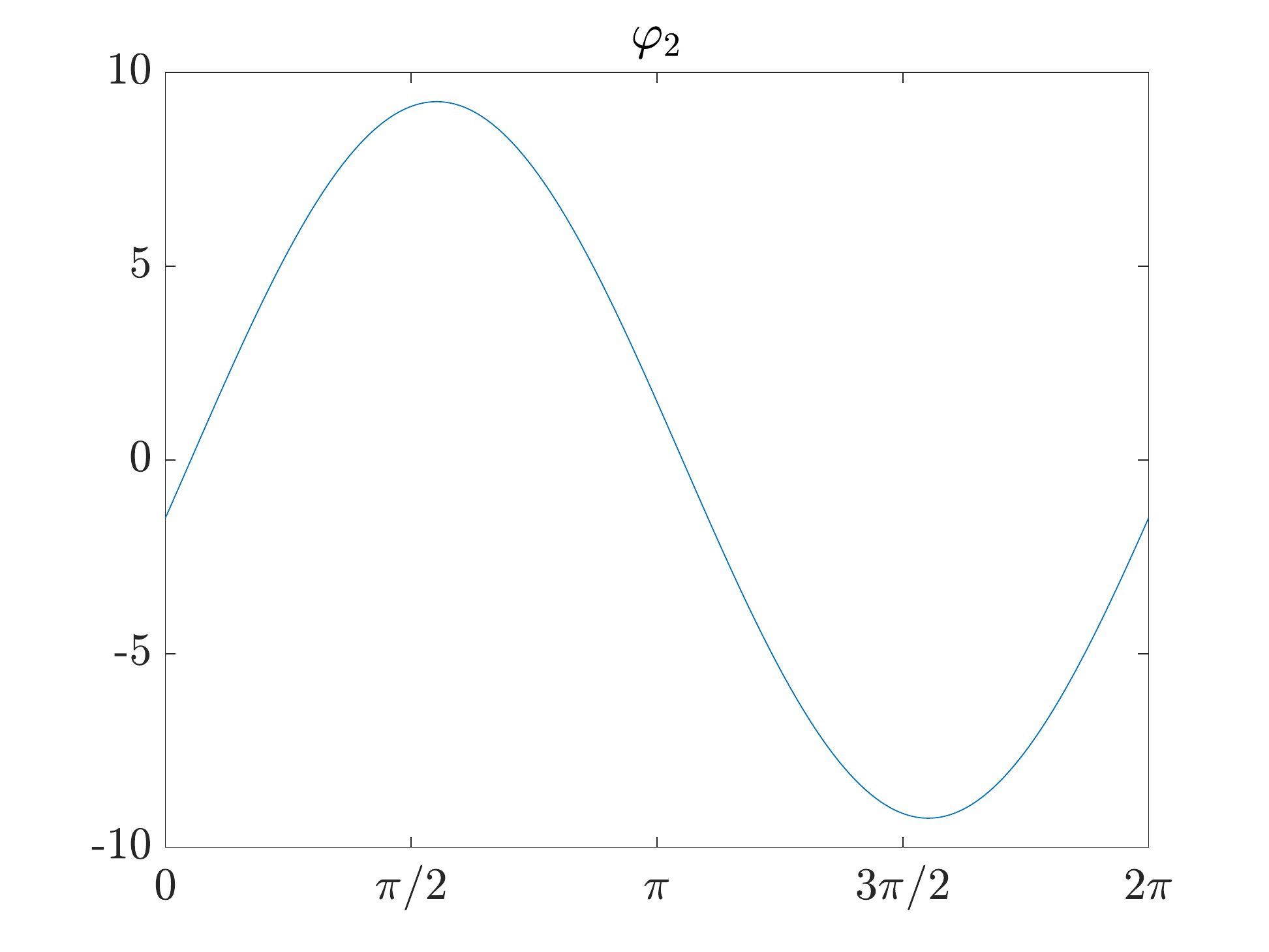}
	\caption{}
\end{subfigure}
\begin{subfigure}[b]{0.5\textwidth}
	\centering
	\includegraphics[width=\linewidth]{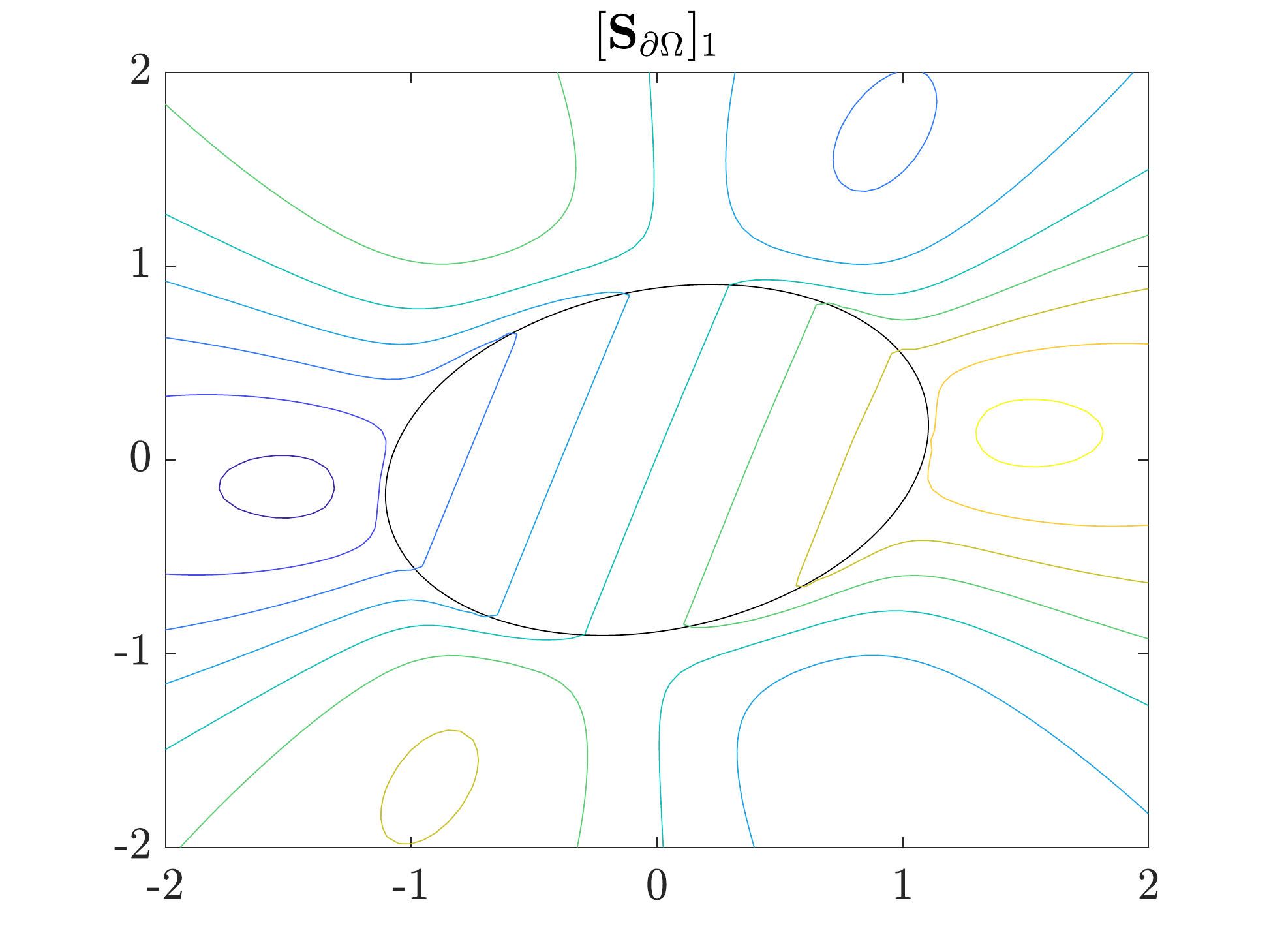}
	\caption{}
\end{subfigure}
\begin{subfigure}[b]{0.5\textwidth}
	\centering
	\includegraphics[width=\linewidth]{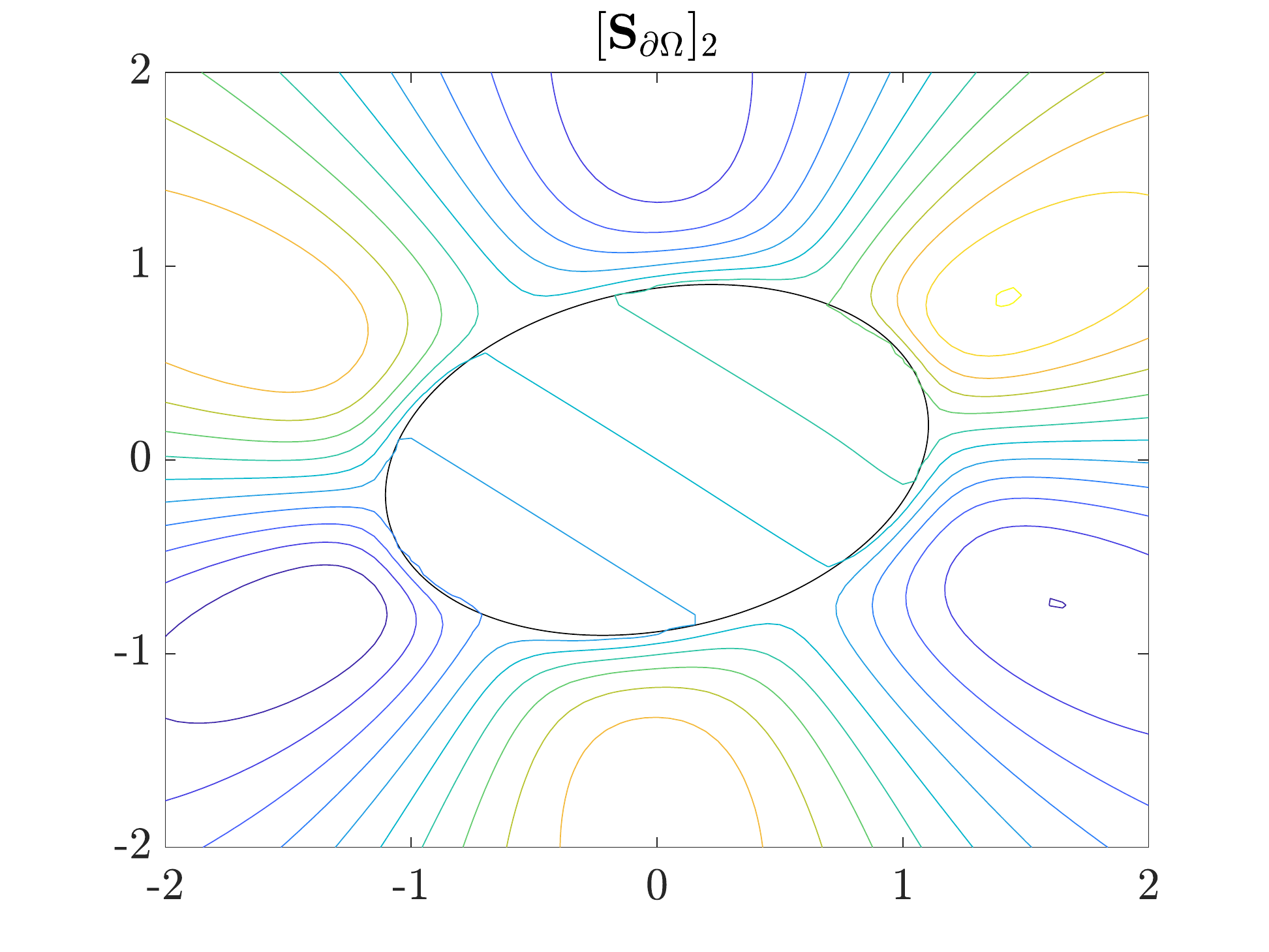}
	\caption{}
\end{subfigure}
\caption{(a)--(b) components of the density function \(\boldsymbol{\varphi}\), (c)--(d) level curves of the components of the single layer potential \(\mathbf{S}_{\partial\Omega}[\boldsymbol{\varphi}]\), when \(\alpha_1=0.5\), \(\kappa=0.3\), and the inclusion is an ellipse with \(a_1=0.1+0.1\mathrm{i}\) subject to a linear far-field loading \(A_1=B_1=1\).}\label{fig:ellipse}
\end{figure}

\subsection{Inclusion of order 2 with an arbitrary far-field loading}

Let us consider an inclusion described by the following exterior conformal mapping:
\begin{equation*}
\Psi(w)=w+\frac{a_1}{w}+\frac{a_2}{w^2}.
\end{equation*}
In this case, it is not straightforward to determine an analytical expression for the Faber polynomials and their derivatives as it is for the ellipse case. Therefore, one has to use the recursive formula \eqref{eqn:Faberrecursion} to generate the Faber polynomials and, consequently, their derivatives.

Again, the coefficients $s_m^{(3)}$ and $s_m^{(4)}$, $m>1$,  are explicitly provided by \eqref{s_3_4}, whereas the coefficients $s_m^{(3)}$ and $s_m^{(4)}$ are given by \eqref{eqn:withJ}, in which 
{\beq\notag
\mathcal{P}\left[\mathbf{s} \right](z)=-\alpha_1\sum_{m=1}^{\infty}\frac{1}{m}{\left(s_m^{(1)}+\mathrm{i}s_m^{(2)}\right)}\overline{F_m(z)}+\mbox{Const.}
\eeq
and
\begin{align*}
J_1(z)&=-\mathrm{i}\frac{2}{\kappa}\left(\frac{a_1}{2}\overline{F_2'(z)}+\frac{a_2}{3}\overline{F_3'(z)}\right),\\
J_2(z)&=\sum_{m=1}^{\infty}\overline{B_m}\,\overline{F_m(z)}
+\overline{A_1}\left(\frac{a_1}{2}\overline{F_2'(z)}+\frac{a_2}{3}\overline{F_3'(z)}\right)
+\sum_{m=2}^{\infty}m\overline{A_m}\sum_{k=-1}^2 \frac{{a_k}}{k+m}\overline{F_{k+m}'(z)}.
\end{align*} 
}
{
	In matrix form, since $\overline{\boldsymbol{\Gamma}}^T\mathbf{D\,A}=\mathbf{0}$, the system \eqnref{matrix_block_system} turns into
	\beq\notag
	\left[\begin{array}{cc}
		\kappa\,\mathbf{D}&\mathbf{0}\\[3mm]
		\mathbf{0}&\kappa\,\mathbf{D}
	\end{array}
	\right]
	\left[
	\begin{array}{c}
		\mathbf{s}\\[3mm]
		\overline{\mathbf{s}}
	\end{array}
	\right]
	=\left[
	\begin{array}{c}
		\mathbf{y}\\[3mm]
		\overline{\mathbf{y}}
	\end{array}
	\right],
	\eeq
	where $\mathbf{y}$ is given by \eqnref{def:mathbfy}.
}
Again, thanks to the invertibility of the matrix $\mathbf{D}$, the constant $c_3$ and the coefficients  $s_m^{(1)}$ and $s_m^{(2)}$ can be easily found. 

In figure \ref{fig:2ndorder}, we provide the graph of the density function as well as the level curves of the single layer potential when the far-field loading is linear. 
\begin{figure}[h!]
	\begin{subfigure}[b]{0.5\textwidth}
		\centering
		\includegraphics[width=\linewidth]{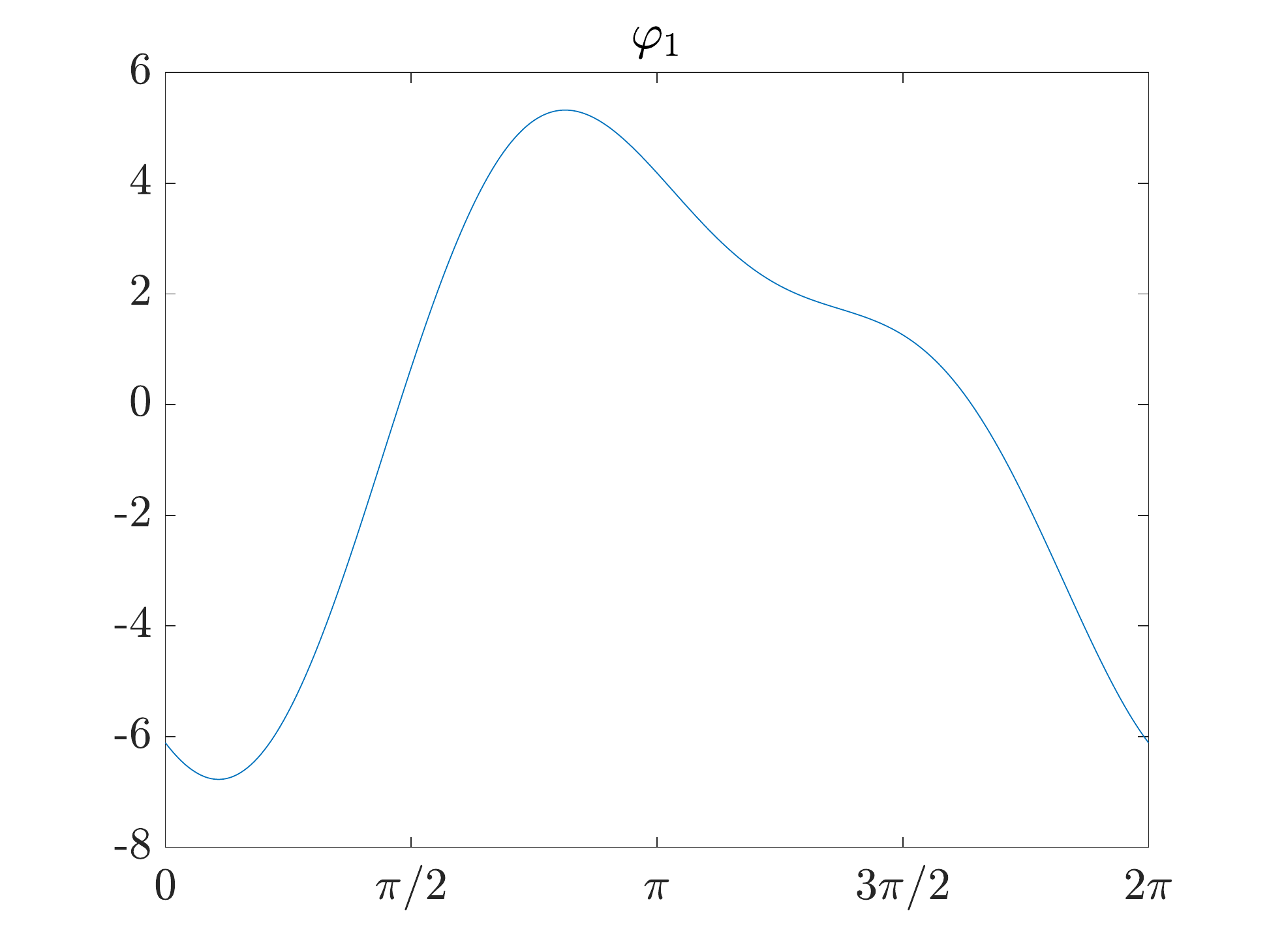}
		\caption{}
	\end{subfigure}
	\begin{subfigure}[b]{0.5\textwidth}
		\centering
		\includegraphics[width=\linewidth]{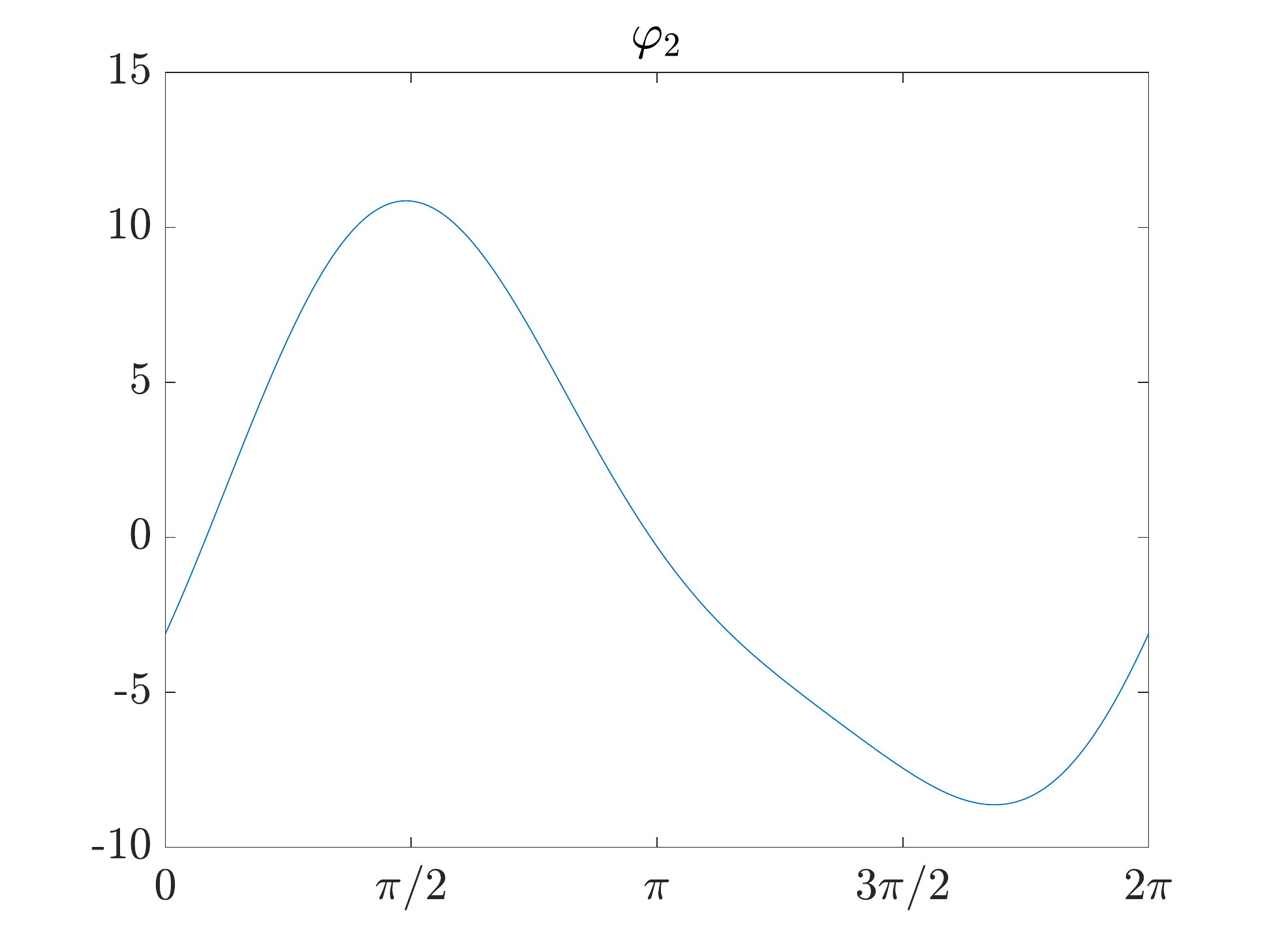}
		\caption{}
	\end{subfigure}
	\begin{subfigure}[b]{0.5\textwidth}
		\centering
		\includegraphics[width=\linewidth]{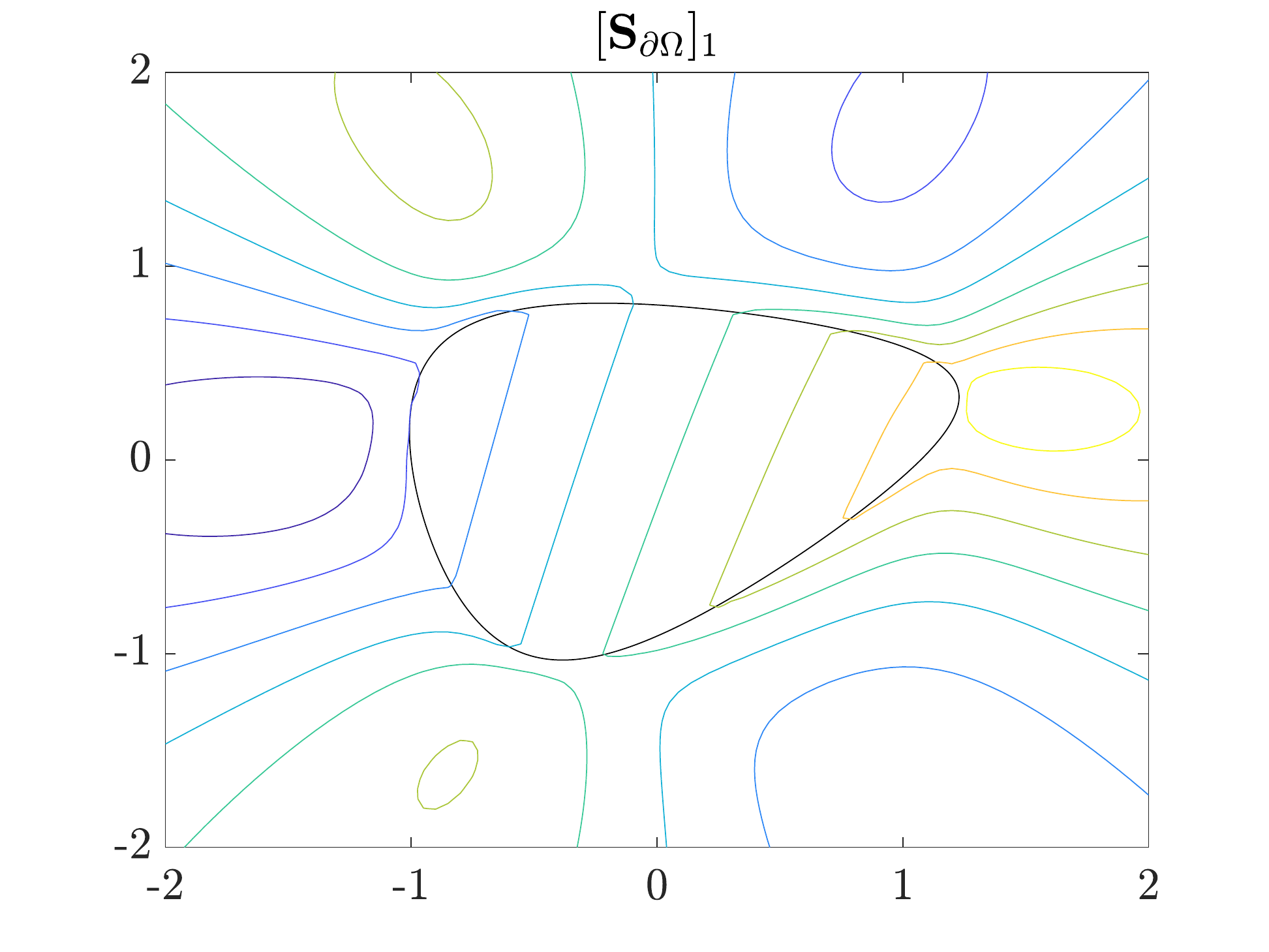}
		\caption{}
	\end{subfigure}
	\begin{subfigure}[b]{0.5\textwidth}
		\centering
		\includegraphics[width=\linewidth]{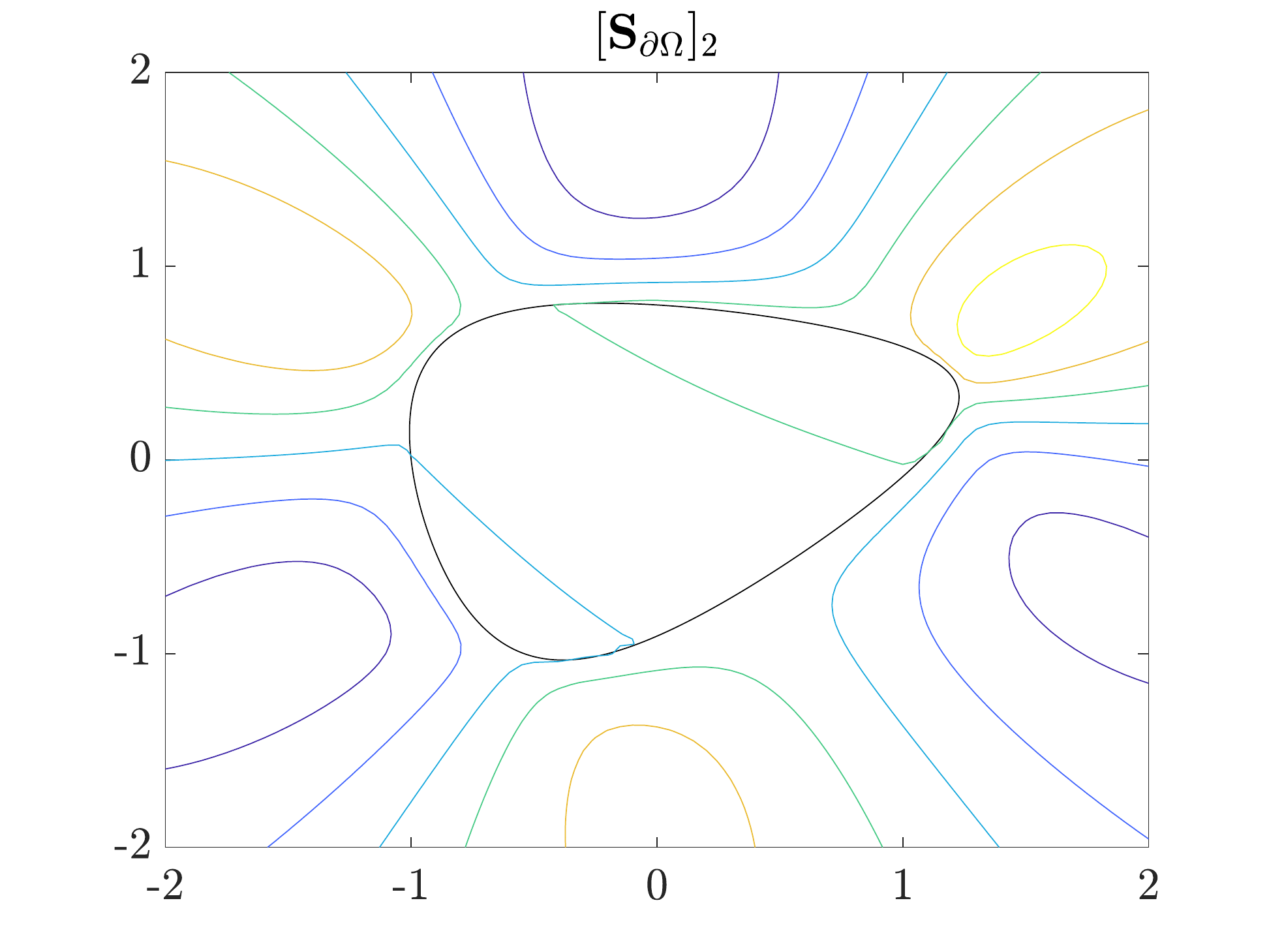}
		\caption{}
	\end{subfigure}
	\caption{(a)--(b) components of the density function \(\boldsymbol{\varphi}\), (c)--(d) level curves of the components of the single layer potential \(\mathbf{S}_{\partial\Omega}[\boldsymbol{\varphi}]\), when \(\alpha_1=0.5\), \(\kappa=0.3\), and the inclusion is a second-order inclusion with \(a_1=a_2=0.1+0.1\mathrm{i}\) subject to a linear far-field loading \(A_1=B_1=1\).}\label{fig:2ndorder}
\end{figure}

\subsection{Inclusion of order 3 with an arbitrary far-field loading}

Let us consider now the case of an inclusion of order 3, for which the conformal mapping looks like
\begin{equation*}
\Psi(w)=w+\frac{a_1}{w}+\frac{a_2}{w^2}+\frac{a_3}{w^3}.
\end{equation*}
Again, the coefficients $s_m^{(3)}$ and $s_m^{(4)}$ are given by \eqref{s_3_4}, whereas the coefficients $s_m^{(1)}$ and $s_m^{(2)}$ are provided by \eqref{eqn:withJ} in which 
{
	\begin{equation}\label{sol_order3}
	\mathcal{P}[\mathbf{s}](z)=-\alpha_1\sum_{m=1}^{\infty}\frac{1}{m}{\left(s_m^{(1)}+\mathrm{i}s_m^{(2)}\right)}\overline{F_m(z)}-\alpha_2 a_3\overline{{\left(s_2^{(1)}+\mathrm{i}s_2^{(2)}\right)}}\,\overline{F_{1}(z)}
	+\mbox{Const.}
	\end{equation}
	and
	\begin{align*}
	J_1(z)&=- \mathrm{i}\frac{2}{\kappa}\sum_{k=1}^3 \frac{{a_k}}{1+k} \overline{F_{1+k}'(z)},\notag\\
	J_2(z)&= \sum_{m=1}^{\infty}\overline{B_m}\,\overline{F_m(z)}
	+\overline{A_1}\sum_{k=1}^3\frac{{a_k}}{k+1} \overline{F_{k+1}'(z)}
	+\sum_{m=2}^{\infty}m\overline{A_m}\sum_{k=-1}^3  \frac{{a_k}}{k+m}\overline{F_{k+m}'(z)}.
	\end{align*}
}
In this case, one has to invert the block matrix in \eqref{matrix_block_system}, which is invertible, given that the matrix $\overline{\boldsymbol{\Gamma}}^T\mathbf{A}\mathbf{D}$ has only one non-zero entry, that is, {the $(1,1)$-entry}. 

In figure \ref{fig:3ndorder}, we provide the graph of the density function as well as the level curves of the single layer potential when the far-field loading is linear. 
\begin{figure}[h!]
	\begin{subfigure}[b]{0.5\textwidth}
		\centering
		\includegraphics[width=\linewidth]{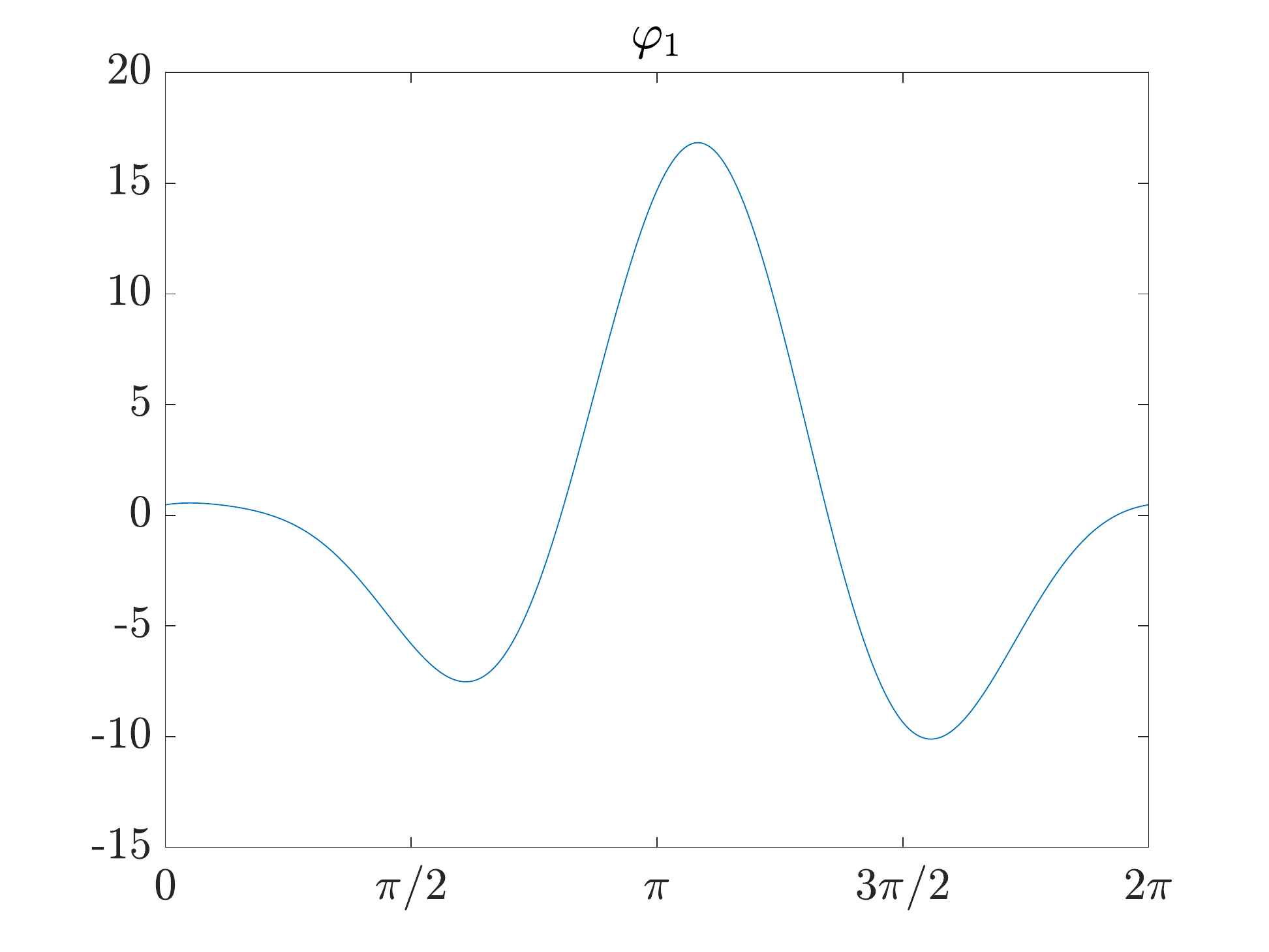}
		\caption{}
	\end{subfigure}
	\begin{subfigure}[b]{0.5\textwidth}
		\centering
		\includegraphics[width=\linewidth]{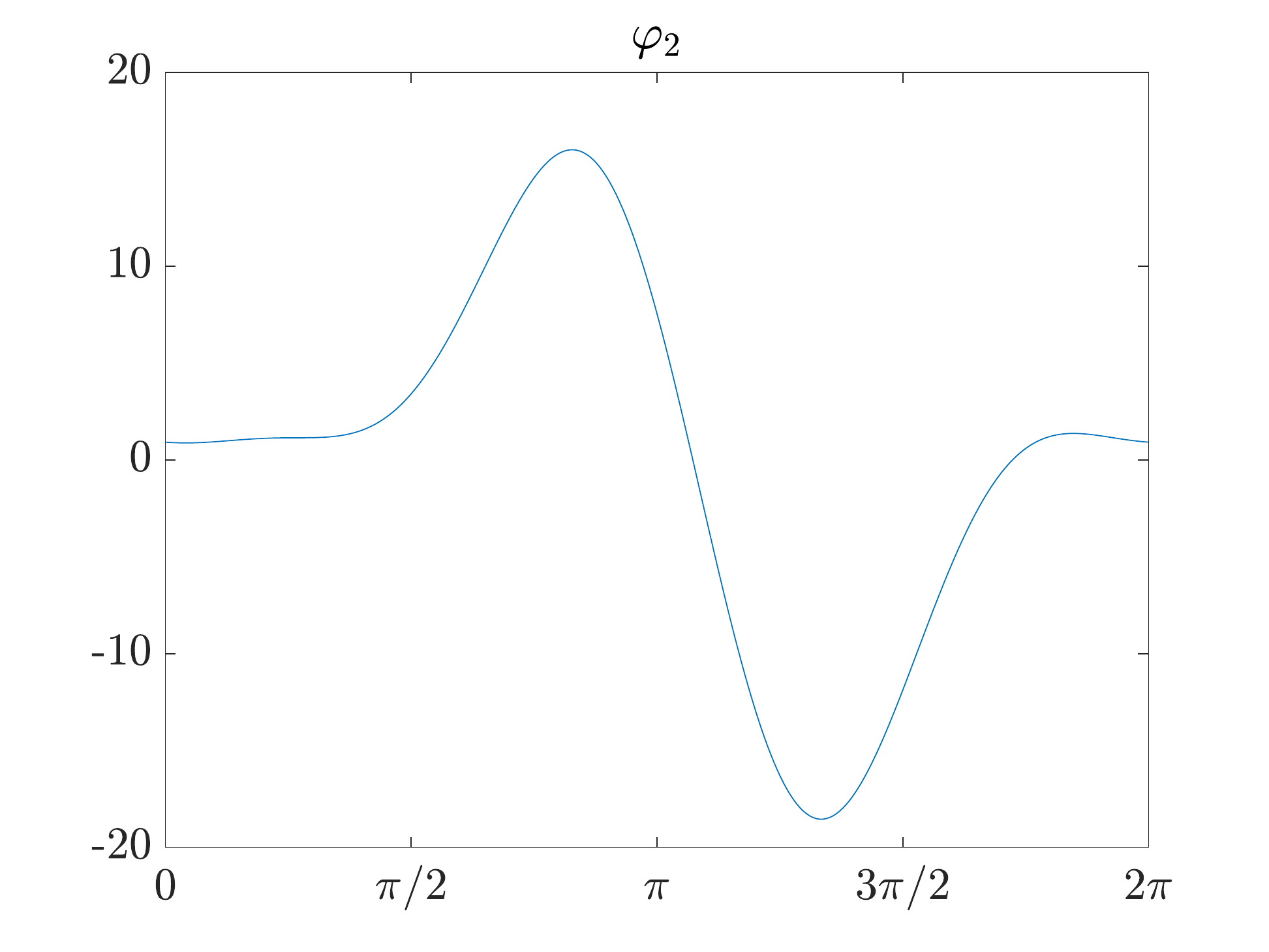}
		\caption{}
	\end{subfigure}
	\begin{subfigure}[b]{0.5\textwidth}
		\centering
		\includegraphics[width=\linewidth]{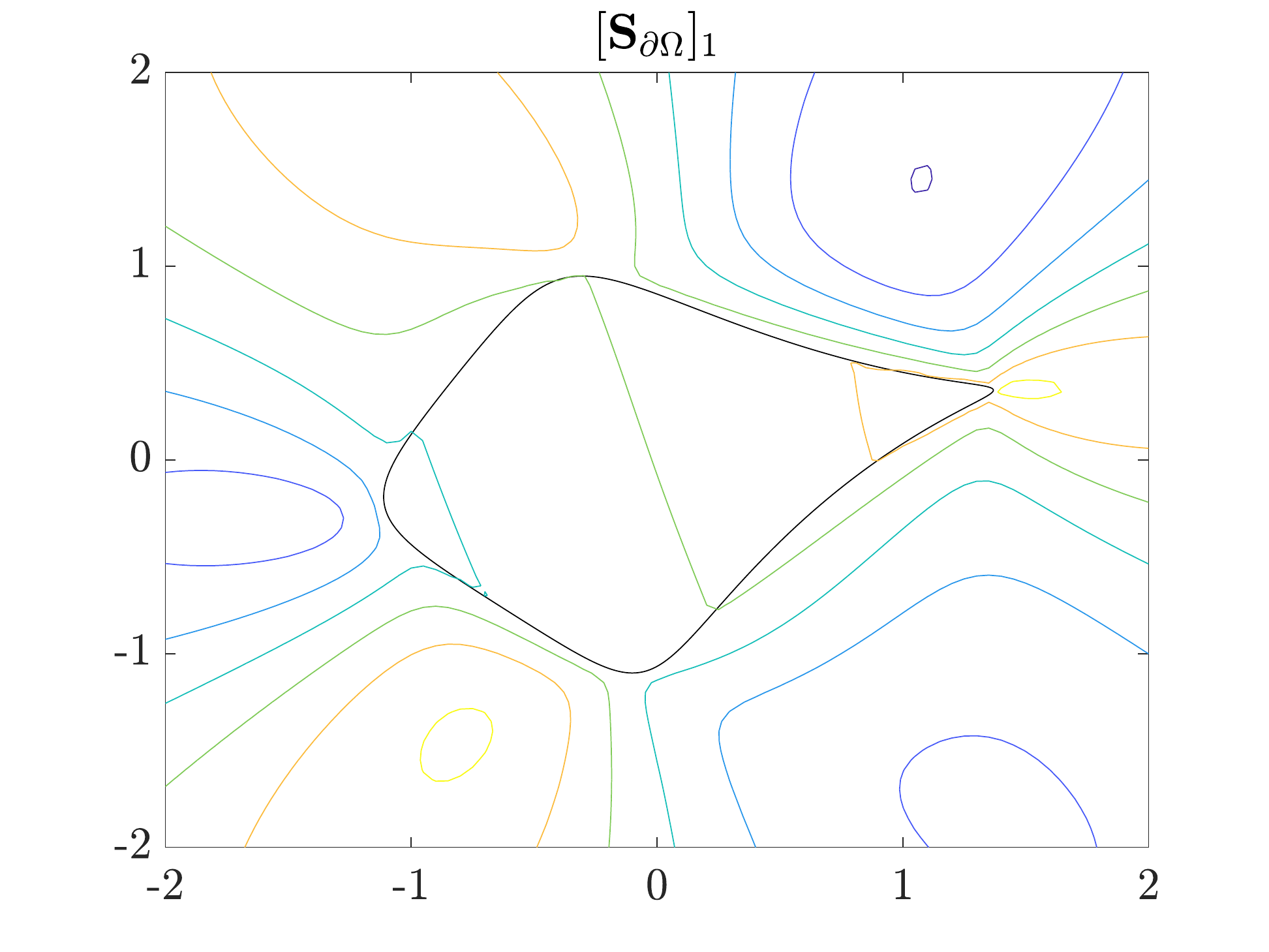}
		\caption{}
	\end{subfigure}
	\begin{subfigure}[b]{0.5\textwidth}
		\centering
		\includegraphics[width=\linewidth]{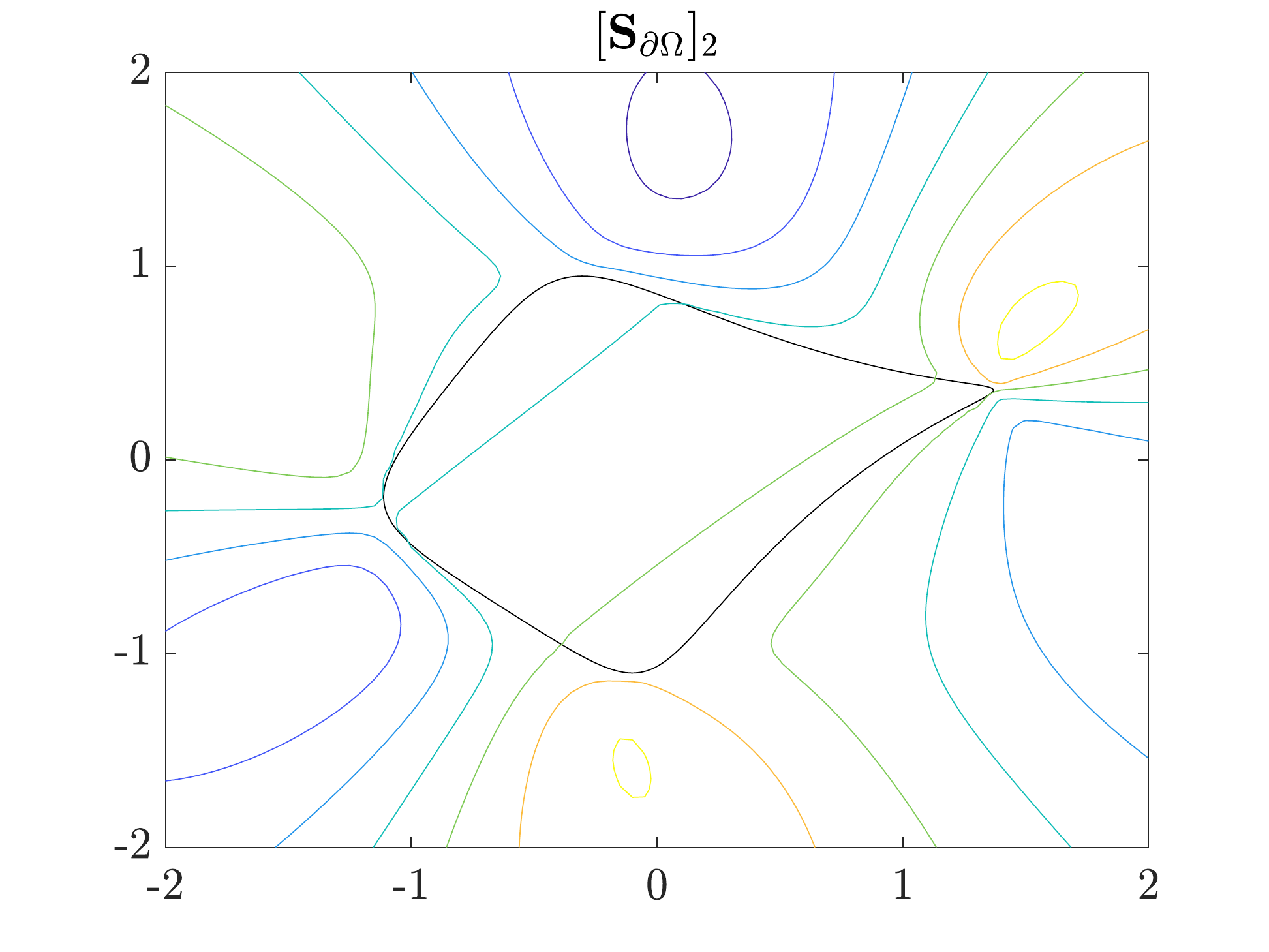}
		\caption{}
	\end{subfigure}
	\caption{(a)--(b) components of the density function \(\boldsymbol{\varphi}\), (c)--(d) level curves of the components of the single layer potential \(\mathbf{S}_{\partial\Omega}[\boldsymbol{\varphi}]\), when \(\alpha_1=0.5\), \(\kappa=0.3\), and the inclusion is a third-order inclusion with \(a_1=a_2=a_3=0.1+0.1\mathrm{i}\) subject to a linear far-field loading \(A_1=B_1=1\).}\label{fig:3ndorder}
\end{figure}

\subsection{Matrix formulation for inclusions of higher order with an arbitrary far-field loading}

{
	Suppose now that the domain $\Omega$ has order $M$, that is, the corresponding conformal mapping \eqref{eqn:extmapping} is
	\begin{equation*}
	\Psi(w)=w+a_0+\frac{a_1}{w}+\cdots+\frac{a_M}{w^M}\ \left(a_M\neq0\right).
	\end{equation*}
	Then, the matrix $\mathbf{A}$ defined by \eqref{b_A_D_def} is an upper anti-triangular matrix
	and so is $\overline{\boldsymbol{\Gamma}^T}\mathbf{D\,A}$, given that $\mathbf{D}$ is diagonal (see \eqref{b_A_D_def}) and $\overline{\boldsymbol{\Gamma}^T}$ is upper triangular (see \eqref{Gamma_F}).
	{In such a case, the $(i,j)$-th entry of $\overline{\boldsymbol{\Gamma}^T}\mathbf{D\,{A}}$ is zero for all $i+j\geq M$ so that Problem (A) (or (A$^\prime$)) can be reduced to the following.}
	\begin{problem*}[A$^{\prime \prime}$] Find $\mathbf{s}_{M-2}=(s_m^{(1)}+\mathrm{i}s_m^{(2)})_{1\leq m\leq M-2}$
		satisfying
		\beq\label{mat:eqn:M}
		\left[\begin{array}{cc}
			\kappa\mathbf{D}_{M-2}	&  \left(\overline{\boldsymbol{\Gamma}^T}\mathbf{D\,A}\right)_{M-2} \\[3mm]
			\left(\boldsymbol{\Gamma}^T\mathbf{D}\,\overline{\mathbf{A}}\right)_{M-2} &	\kappa\mathbf{D}_{M-2}
		\end{array}
		\right]
		\left[
		\begin{array}{c}
			\mathbf{s}_{M-2}\\[3mm]
			\overline{\mathbf{s}}_{M-2}
		\end{array}
		\right]
		=\left[
		\begin{array}{c}
			\mathbf{y}_{M-2}\\[3mm]
			\overline{\mathbf{y}}_{M-2}
		\end{array}
		\right]
		\eeq
		and $\,\widehat{\mathbf{s}}_{M-2}=(s_m^{(1)}+\mathrm{i}s_m^{(2)})_{m> M-2}$ satisfying 
		\beq\label{mat:finiteM:higher}
		\left[\begin{array}{cc}
			\kappa\,\widehat{\mathbf{D}}_{M-2}&\mathbf{0}\\[3mm]
			\mathbf{0}&\kappa\,\widehat{\mathbf{D}}_{M-2}
		\end{array}
		\right]
		\left[
		\begin{array}{c}
			\widehat{\mathbf{s}}_{M-2}\\[3mm]
			\overline{\widehat{\mathbf{s}}}_{M-2}
		\end{array}
		\right]
		=\left[
		\begin{array}{c}
			\widehat{\mathbf{y}}_{M-2}\\[3mm]
			\overline{\widehat{\mathbf{y}}}_{M-2}
		\end{array}
		\right].
		\eeq
		Here, for notational convenience, we denote by $\mathbf{v}_{n}=(v_i)_{i=1}^n$ the $n\times 1$ subvector of a given vector $\mathbf{v}=(v_i)_{i=1}^\infty$ and $\mathbf{B}_{n}$ the $n\times n$ submatrix $(b_{ij})_{i,j=1}^n$ of a given matrix $\mathbf{B}=(b_{ij})_{i,j=1}^\infty$. We also set $\widehat{\mathbf{v}}_n=(v_i)_{i= n+1}^\infty$ and $\widehat{\mathbf{B}}_n=(b_{ij})_{i,j=n+1}^\infty$.
	\end{problem*} 
	As a consequence of \eqnref{mat:finiteM:higher}, the coefficients  $s_m^{(1)}+\mathrm{i}s_m^{(2)}$ with $m> M-2$ can be determined in a unique way:
	{\beq\label{sol:orderM}
	s_m^{(1)}+\mathrm{i}s_m^{(2)}= \frac{m}{\kappa}\,\overline{y}_m\quad\mbox{for }m> M-2.\eeq
	}
	Clearly, to determine the coefficients $s_m^{(1)}+\mathrm{i}s_m^{(2)}$ with $m\leq M-2$, one has first to assess the invertibility of the $(2M-4)\times(2M-4)$ matrix in \eqnref{mat:eqn:M}.
	
}

\section{Conclusion}

The plane elastostatic transmission problem is a classical problem in Applied Mechanics, for which the solution is provided explicitly only when the inclusion has a simple shape, such as in the case of an ellipsoidal inclusion for which one can use elliptic coordinates (see, e.g., \cite{Ando:2018:SPN}). For an inclusion of arbitrary shape such a coordinate system can be defined only locally, thus preventing one from finding an explicit series solution to the  transmission problem.
To the best of our knowledge, there has been no previous work that provides an explicit series solution in the case of a domain of arbitrary shape with an arbitrary far-field loading.

The key idea of our approach, successfully applied to the conductivity case in \cite{Jung:2018:SSM}, consists in the introduction of two sets of bases, one for analytic functions defined outside of the inclusion and one for analytic functions defined inside the inclusion. The exterior basis is based on the coordinate system introduced by the external conformal mapping associated with the inclusion, whereas the interior basis is based on the Faber polynomials associated with the inclusion. The introduction of Faber polynomials 
allows one to overcome the drawbacks associated with the use of conformal mappings, and it presents
a novel method to determine the solution of the elastic transmission problem in an elegant way. Indeed, thanks to the properties of Faber polynomials, the transmission condition at the boundary of the inclusion allows one to derive an explicit formula for the coefficients of the series expansion of the transmission problem in terms of the coefficients of the series expansion of the far-field loading, supposed to be arbitrary. Specifically, in this work, we provide an explicit analytical formula in the case of an arbitrary far-field loading and an algebraic inclusion of order up to 3, whereas, for higher order domains, some numerical computations are required.

This paper represents the first step towards a complete characterization of the plane elastostatic transmission problem.  Indeed, for the general case, besides the transmission condition regarding the displacements, one should also consider the one concerning the continuity of tractions at the boundary of the inclusion. This would put forward another research avenue{\textemdash}the solution of the so-called \textit{E-inclusion problem} for the plane elastostatic case{\textemdash}thus extending the results found in \cite{Kim:2018:EEC:preprint} for the conductivity problem. Such a  problem involves the determination of  the shape of the inclusion that provides uniform fields inside the inclusion for any or some applied far-field loadings. Note that finding E-inclusions is an important problem in many practical applications concerning the design of materials which induce stress fields with small variances in the inclusion phase: these inclusions, which are tailored to the applied field, are generally less likely to break down than inclusions with large variances of the stress field.
The ultimate goal would be to solve the elastostatic \textit{neutral inclusion problem}: some coated inclusions, when placed in a medium, do not disturb the exterior field, and these are denoted as neutral inclusions. Once a neutral inclusion has been found, similar inclusions, possibly of different sizes, can be added to the background matrix without altering the exterior uniform field (e.g., \cite{Hashin:1962:EMH}). In this way it becomes possible to construct a composite, consisting of multiple inclusions and a background matrix, whose effective property exactly coincides with that of the matrix.

\section*{Acknowledgments}
{ML is supported by the Basic Science Research Program through the National Research Foundation of Korea (NRF) funded by the Ministry of Education (2019R1A6A1A10073887). OM is grateful to the National Science Foundation for support through the Research Grant DMS-942514985.

 \section*{Conflict of interest}

 The authors declare that they have no conflict of interest.


\begin{thebibliography}{}
\providecommand{\url}[1]{{#1}}
\providecommand{\urlprefix}{URL }
\expandafter\ifx\csname urlstyle\endcsname\relax
\providecommand{\doi}[1]{\discretionary{}{}{}doi.org/#1}\else
  \providecommand{\doi}{DOI~\discretionary{}{}{}\begingroup
  \urlstyle{rm}\Url}\fi
\expandafter\ifx\csname doistyle\endcsname\relax
 

\bibitem{Ammari:2013:MSM:book}
Ammari, H., Garnier, J., Jing, W., Kang, H., Lim, M., S{\o}lna, K., Wang, H.:
  Mathematical and statistical methods for multistatic imaging.
\newblock Lecture Notes in Mathematics (vol. 2098). Springer, Cham (2013).
\newblock \doi{10.1007/978-3-319-02585-8}

\bibitem{Ammari:2004:RSI}
Ammari, H., Kang, H.: Reconstruction of Small Inhomogeneities from Boundary
  Measurements, \emph{Lecture Notes in Mathematics}, vol. 1846.
\newblock Springer-Verlag, Berlin, Germany~/ Heidelberg, Germany~/ London, UK~/
  etc. (2004).
\newblock \doi{10.1007/b98245}.

\bibitem{Ando:2018:SPN}
Ando, K., Ji, Y.G., Kang, H., Kim, K., Yu, S.: Spectral properties of the
  {N}eumann-{P}oincar\'{e} operator and cloaking by anomalous localized
  resonance for the elasto-static system.
\newblock European J. Appl. Math. \textbf{29}(2), 189--225 (2018).
\newblock \doi{10.1017/S0956792517000080}

\bibitem{Asaro:1975:NUT}
Asaro, R.J., Barnett, D.M.: The non-uniform transformation strain problem for
  an anisotropic ellipsoidal inclusion.
\newblock Journal of the Mechanics and Physics of Solids \textbf{23}(1), 77--83
  (1975).
\newblock \doi{10.1016/0022-5096(75)90012-5}.

\bibitem{Buryachenko:2007:MHM}
Buryachenko, G.J.: Micromechanics of Heterogeneous Materials.
\newblock Springer, New York (2007)

\bibitem{Caratheodory:1913:GBR}
Carath\'{e}odory, C.: \"{U}ber die gegenseitige {B}eziehung der {R}\"{a}nder
  bei der konformen {A}bbildung des {I}nneren einer {J}ordanschen {K}urve auf
  einen {K}reis.
\newblock Math. Ann. \textbf{73}(2), 305--320 (1913).
\newblock \doi{10.1007/BF01456720}

\bibitem{Chen:2015:NEE}
Chen, F., Giraud, A., Sevostianov, I., Grgic, D.: Numerical evaluation of the
  eshelby tensor for a concave superspherical inclusion.
\newblock Int. J. Eng. Sci. \textbf{93}, 51--58 (2015).
\newblock \doi{10.1016/j.ijengsci.2015.04.007}

\bibitem{Chiu:1977:OTS}
Chiu, Y.P.: {On the stress field due to initial strains in a cuboid surrounded
  by an infinite elastic space}.
\newblock ASME Journal of Applied Mechanics \textbf{44}, 587--590 (1977)

\bibitem{Choi:2020:ASR:preprint}
Choi, D., Kim, J., Lim, M.: Analytical shape recovery of a conductivity
  inclusion based on {Faber} polynomials.
\newblock Mathematische Annalen (2020). \doi{10.1007/s00208-020-02041-1}

\bibitem{Choi:2018:GME:preprint}
Choi, D., Kim, J., Lim, M.: Geometric multipole expansion and its application
  to neutral inclusions of general shape.
\newblock arXiv:1808.02446  (2018).

\bibitem{Kim:2018:EEC:preprint}
Choi, D., Kim, K., Lim, M.: An extension of the {E}shelby conjecture to domains
  of general shape in anti-plane elasticity.
\newblock Math. Anal. Appl. \textbf{495}(2), 124756 (2021).
\newblock \doi{10.1016/j.jmaa.2020.124756}

\bibitem{Duren:1983:UF}
Duren, P.: Univalent Functions.
\newblock Grundlehren der mathematischen Wissenschaften (vol. 259).
  Springer-Verlag New York (1983).
\newblock \urlprefix\url{https://www.springer.com/kr/book/9780387907956}

\bibitem{Dvorak:2013:MCM}
Dvorak, G.J.: Micromechanics of Composite Materials.
\newblock Springer, Berlin (2013)

\bibitem{Edwards:1951:SCA}
Edwards, R.H.: Stress concentrations around spheroidal inclusions and cavities.
\newblock ASME Journal of Applied Mechanics \textbf{18}, 19--30 (1951)

\bibitem{England:1971:CVM}
England, A.H.: Complex Variable Methods in Elasticity.
\newblock Wiley-In{\-}ter{\-}sci{\-}ence, New York (1971)

\bibitem{Eshelby:1957:DEF}
Eshelby, J.D.: The determination of the elastic field of an ellipsoidal
  inclusion and related problems.
\newblock Proceedings of the Royal Society of London \textbf{241}(1226),
  376--396 (1957).
\newblock \doi{10.1098/rspa.1957.0133}

\bibitem{Eshelby:1961:EII}
Eshelby, J.D.: Elastic inclusions and inhomogeneities.
\newblock In: I.N. Sneddon, R.~Hill (eds.) Progress in Solid Mechanics,
  vol.~II, pp. 87--140. North-Holland Publishing Co., Amsterdam (1961)

\bibitem{Faber:1903:UPE}
Faber, G.: {\"U}ber polynomische entwicklungen.
\newblock Math. Ann. \textbf{57}(3), 389--408 (1903)

\bibitem{Gao:2010:SGS}
Gao, X.L., Ma, H.M.: Strain gradient solution for {E}shelby's ellipsoidal
  inclusion problem.
\newblock Proc. R. Soc. Lond. Ser. A \textbf{466}(2120), 2425--2446 (2010).
\newblock \doi{10.1098/rspa.2009.0631}

\bibitem{Goodier:1933:CSA}
Goodier, J.N.: Concentration of stress around spherical and cylindrical
  inclusions and flaws.
\newblock ASME Journal of Applied Mechanics \textbf{55}, 39--44 (1933)

\bibitem{Grunsky:1939:KSA}
Grunsky, H.: Koeffizientenbedingungen f\"{u}r schlicht abbildende meromorphe
  {F}unktionen.
\newblock Math. Z. \textbf{45}(1), 29--61 (1939).
\newblock \doi{10.1007/BF01580272}

\bibitem{Hashin:1962:EMH}
Hashin, Z.: The elastic moduli of heterogeneous materials.
\newblock Journal of Applied Mechanics \textbf{29}(1), 143--150 (1962).
\newblock \doi{10.1115/1.3636446}.

\bibitem{Huang:2009:EEE}
Huang, M., Zou, W., Zheng, Q.S.: Explicit expression of {Eshelby tensor} for
  arbitrary weakly non-circular inclusion in two-dimensional elasticity.
\newblock Int. J. Eng. Sci. \textbf{47}(11), 1240 -- 1250 (2009).
\newblock \doi{10.1016/j.ijengsci.2009.01.005}.

\bibitem{Jung:2018:SSM}
Jung, Y., Lim, M.: Series expansions of the layer potential operators using the Faber polynomials and their applications to the transmission problem.
\newblock SIAM J. Math. Anal  \textbf{53}(2), 1630 -- 1669 (2021).
\newblock \doi{10.1137/20M1348698}

\bibitem{Jung:2020:DEE}
Jung, Y., Lim, M.: A decay estimate for the eigenvalues of the
  {N}eumann-{P}oincar\'{e} operator using the grunsky coefficients.
\newblock Proc. Amer. Math. Soc. \textbf{148}(2), 591--600 (2020).
\newblock \doi{10.1090/proc/14785}

\bibitem{Kang:2009:CPS}
Kang, H.: Conjectures of {P{\'o}lya--Szeg{\H{o}}} and {Eshelby}, and the
  {Newtonian} potential problem: A review.
\newblock Mechanics of Materials: an International Journal \textbf{41}(4),
  405--410 (2009).
\newblock \doi{10.1016/j.mechmat.2009.01.019}.
\newblock Special Issue in Honor of Graeme W. Milton, 2007 Winner of the
  William Prager Medal of the Society of Engineering Science.

\bibitem{Kang:2008:SPS}
Kang, H., Milton, G.W.: Solutions to the {P{\'o}lya--Szeg{\H{o}}} conjecture
  and the {Weak Eshelby Conjecture}.
\newblock Archive for Rational Mechanics and Analysis \textbf{188}(1), 93--116
  (2008).
\newblock \doi{10.1007/s00205-007-0087-z}.

\bibitem{Kang:2019:QCS}
Kang, H., Yu, S.: Quantitative characterization of stress concentration in the
  presence of closely spaced hard inclusions in two-dimensional linear
  elasticity.
\newblock Arch. Ration. Mech. Anal. \textbf{232}(1), 121--196 (2019).
\newblock \doi{10.1007/s00205-018-1318-1}

\bibitem{Kinoshita:1971:EFI}
Kinoshita, N., Mura, T.: Elastic fields of inclusions in anisotropic media.
\newblock Physica Status Solidi (a) \textbf{5}(3), 759--768 (1971).
\newblock \doi{10.1002/pssa.2210050332}.

\bibitem{Kupradze:1965:PMT}
Kupradze, V.D.: Potential methods in the theory of elasticity.
\newblock Israel program for scientific translations (1965)

\bibitem{Lee:2016:EEF}
Lee, Y.G., Zou, W.N.: Exterior elastic fields of non-elliptical inclusions
  characterized by laurent polynomials.
\newblock Eur. J. Mech. A-Solid \textbf{60}, 112 -- 121 (2016).
\newblock \doi{10.1016/j.euromechsol.2016.06.010}.

\bibitem{List:1966:TDE}
List, R.D., Silberstein, J.P.O.: Two-dimensional elastic inclusion problems.
\newblock Mathematical Proceedings of the Cambridge Philosophical Society
  \textbf{62}(2), 303--311 (1966).
\newblock \doi{10.1017/S0305004100039876}

\bibitem{Liu:2008:SEC}
Liu, L.P.: Solutions to the {Eshelby} conjectures.
\newblock Proceedings of the Royal Society A: Mathematical, Physical, \&
  Engineering Sciences \textbf{464}(2091), 573--594 (2008).
\newblock \doi{10.1098/rspa.2007.0219}.

\bibitem{Lu:1995:CVM}
Lu, J.k.: Complex variable methods in plane elasticity, vol.~22.
\newblock World Scientific (1995)

\bibitem{Maxwell:1873:TEM}
Maxwell, J.C.: A Treatise on Electricity and Magnetism, vol.~1, pp. 371--372.
\newblock Clarendon Press, Oxford, UK (1873).
\newblock Article 322.

\bibitem{Movchan1997ThePM}
Movchan, A., Serkov, S.: The P{\'o}lya--Szeg{\"o} matrices in asymptotic models of dilute composites.
	\newblock European Journal of Applied Mathematics, \textbf{8}, 595-621 (1997).
\newblock\doi{10.1017/S095679259700315X}.

\bibitem{Muskhelishvili:1963:SBP}
Muskhelishvili, N.I.: Some Basic Problems of the Mathematical Theory of
  Elasticity: Fundamental Equations, Plane Theory of Elasticity, Torsion, and
  Bending.
\newblock P. Noordhoff, Groningen, The Netherlands (1963). 


\bibitem{Nozaki:1997:EFP}
Nozaki, H., Taya, M.: Elastic fields in a polygon-shaped inclusion with uniform
  eigenstrains.
\newblock ASME Journal of Applied Mechanics \textbf{64}, 495--502 (1997)


\bibitem{Parnell:2016:EHM}
Parnell, W.J.: {The Eshelby, Hill, Moment and Concentration Tensors for
  Ellipsoidal Inhomogeneities in the Newtonian Potential Problem and Linear
  Elastostatics}.
\newblock Journal of Elasticity \textbf{125}(2), 231--294 (2016).
\newblock \doi{10.1007/s10659-016-9573-6}.

\bibitem{Poisson:1826:SMS}
Poisson, S.D.: Second m{\'e}moire sur la th{\'e}orie de magn{\'e}tisme.
  ({French}) [{Second} memoir on the theory of magnetism.
\newblock M{\'e}moires de l'Acad{\'e}mie royale des Sciences de l'Institut de
  France \textbf{5}, 488--533 (1826)

\bibitem{Pommerenke:1992:BBC:book}
Pommerenke, C.: Boundary behaviour of conformal maps.
\newblock Grundlehren der mathematischen Wissenschaften (vol. 299).
  Springer-Verlag, Berlin, Germany (1992).
\newblock \urlprefix\url{https://www.springer.com/kr/book/9783540547518}

\bibitem{Qu:2006:FMS}
Qu, J., Cherkaoui, M.: Fundamentals of Micromechanics of Solids.
\newblock Wiley, New York (2006)

\bibitem{Robinson:1951:EEE}
Robinson, K.: Elastic energy of an ellipsoidal inclusion in an infinite solid.
\newblock Journal of Applied Physics \textbf{22}(8), 1045--1054 (1951).
\newblock \doi{10.1063/1.1700099}.

\bibitem{Rodin:1996:EIP}
Rodin, G.J.: Eshelby's inclusion problem for polygons and polyhedra.
\newblock Journal of the Mechanics and Physics of Solids \textbf{44},
  1977--1995 (1996)

\bibitem{Ru:1999:ASE}
Ru, C.Q.: {Analytic Solution for Eshelby's Problem of an Inclusion of Arbitrary
  Shape in a Plane or Half-Plane}.
\newblock ASME Journal of Applied Mechanics \textbf{66}(2), 315--523 (1999)

\bibitem{Ru:1996:EIA}
Ru, C.Q., Schiavone, P.: On the elliptic inclusion in anti-plane shear.
\newblock Mathematics and Mechanics of Solids : MMS \textbf{1}(3), 327--333
  (1996).
\newblock \doi{10.1177/108128659600100304}.

\bibitem{Sadowsky:1947:SCA}
Sadowsky, M.A., Sternberg, E.: Stress concentration around an ellipsoidal
  cavity in an infinite body under arbitrary plane stress perpendicular to the
  axis of revolution of cavity.
\newblock ASME Journal of Applied Mechanics \textbf{14}, 191--201 (1947)

\bibitem{Sadowsky:1949:SCA}
Sadowsky, M.A., Sternberg, E.: Stress concentration around a triaxial
  ellipsoidal cavity.
\newblock ASME Journal of Applied Mechanics \textbf{16}, 149--157 (1949)

\bibitem{Sendeckyj:1970:EIP}
Sendeckyj, G.P.: Elastic inclusion problems in plane elastostatics.
\newblock International Journal of Solids and Structures \textbf{6}(12),
  1535--1543 (1970).
\newblock \doi{10.1016/0020-7683(70)90062-4}

\bibitem{Southwell:1926:OCS}
Southwell, R., Gough, H.: {VI. O}n the concentration of stress in the
  neighbourhood of a small spherical flaw; and on the propagation of fatigue
  fractures in ``statistically isotropic'' materials.
\newblock The London, Edinburgh, and Dublin Philosophical Magazine and Journal
  of Science \textbf{1}(1), 71--97 (1926).
\newblock \doi{10.1080/14786442608633614}

\bibitem{Walpole:1967:EFI}
Walpole, L.J., Hill, R.: The elastic field of an inclusion in an anisotropic
  medium.
\newblock Proceedings of the Royal Society of London. Series A. Mathematical
  and Physical Sciences \textbf{300}(1461), 270--289 (1967).
\newblock \doi{10.1098/rspa.1967.0170}.

\bibitem{Zhou:2013:ARR}
Zhou, K., Hoh, H.J., Wang, X., Keer, L.M., Pang, J.H., Song, B., Wang, Q.J.: A
  review of recent works on inclusions.
\newblock Mechanics of Materials \textbf{60}, 144 -- 158 (2013).
\newblock \doi{10.1016/j.mechmat.2013.01.005}.

\bibitem{Zou:2010:EPN}
Zou, W., He, Q., Huang, M., Zheng, Q.: Eshelby's problem of non-elliptical
  inclusions.
\newblock Journal of the Mechanics and Physics of Solids \textbf{58}(3), 346 --
  372 (2010).
\newblock \doi{10.1016/j.jmps.2009.11.008}.
\end{thebibliography}


\end{document}